\documentclass[12pt, leqno]{article}
\usepackage{amsmath,amsfonts,amsthm,amscd,amssymb,verbatim} 
\usepackage[all]{xy}

\numberwithin{equation}{section}

\theoremstyle{plain}
\newtheorem{thm}{Theorem}[section]
\newtheorem{defn}[thm]{Definition}
\newtheorem{prop}[thm]{Proposition}
\newtheorem{lem}[thm]{Lemma}
\newtheorem{cor}[thm]{Corollary}
\newtheorem{conj}[thm]{Conjecture}

\theoremstyle{definition}
\newtheorem{rem}[thm]{Remark}

\newcommand{\N}{{\Bbb N}}
\newcommand{\Z}{{\Bbb Z}}
\newcommand{\Q}{{\Bbb Q}}

\newcommand{\F}{{\Bbb F}}
\newcommand{\V}{{\Bbb V}}

\newcommand{\Spec}{{\mathrm{Spec}}\,}

\newcommand{\Spf}{{\mathrm{Spf}}\,}

\newcommand{\lra}{\longrightarrow}

\newcommand{\hra}{\hookrightarrow}

\newcommand{\isom}{\overset{\sim}{=}}

\newcommand{\wt}[1]{\widetilde{#1}}

\newcommand{\ol}[1]{\overline{#1}}
\newcommand{\os}{\overset}

\newcommand{\dR}{{\mathrm{dR}}}

\newcommand{\et}{{\mathrm{et}}}

\newcommand{\Hom}{{\mathrm{Hom}}}

\newcommand{\Ker}{{\mathrm{Ker}}}
\newcommand{\im}{{\mathrm{Im}}}
\newcommand{\Coker}{{\mathrm{Coker}}}

\newcommand{\id}{{\mathrm{id}}}

\newcommand{\pr}{{\mathrm{pr}}}

\newcommand{\bE}{{\bold E}}
\newcommand{\bX}{{\bold X}}

\newcommand{\cD}{{\cal D}}
\newcommand{\cE}{{\cal E}}
\newcommand{\cF}{{\cal F}}
\newcommand{\cG}{{\cal G}}
\newcommand{\cH}{{\cal H}}
\newcommand{\cI}{{\cal I}}

\newcommand{\cO}{{\cal O}}

\newcommand{\cU}{{\cal U}}

\newcommand{\cX}{{\cal X}}

\newcommand{\fe}{{\frak e}}

\newcommand{\fE}{{\frak E}}

\newcommand{\fG}{{\frak G}}

\newcommand{\fU}{{\frak U}}
\newcommand{\fX}{{\mathfrak{X}}}

\newcommand{\rig}{{\mathrm{rig}}}

\newcommand{\wh}{\widehat}
\renewcommand{\wt}{\widetilde}

\newcommand{\pa}{\partial}

\newcommand{\e}{{\bold e}}

%\def\partname{Chapter}
%\renewcommand\thesection{\thepart.\arabic{section}}

%%% new macro %%%%

%%%%%%%%%%%%%%%%%%%%

\setlength{\oddsidemargin}{5mm}
\setlength{\evensidemargin}{5mm}
\setlength{\topmargin}{10mm}
\setlength{\headheight}{3mm}
\setlength{\headsep}{3mm}
\setlength{\textwidth}{150mm}
\setlength{\textheight}{220mm}

\begin{document}
\title{A note on convergent isocrystals on simply connected varieties}
\author{
Atsushi Shiho\footnote{
Graduate School of Mathematical Sciences, 
University of Tokyo, 3-8-1 Komaba, Meguro-ku, Tokyo 153-8914, Japan. 
E-mail address: shiho@ms.u-tokyo.ac.jp. 
Mathematics Subject Classification (2010): 14F10, 14D20.}}
\date{}
\maketitle

\begin{abstract}
It is conjectured by de Jong that, 
if $X$ is a connected projective smooth variety over an algebraically closed 
field $k$ of characteristic $p>0$ with trivial etale fundamental group, 
any convergent isocrystal $\cE$ on 
$X$ is trivial. We discuss this conjecture when $X$ is liftable to 
characteristic zero, and 
prove the triviality of $\cE$ in this case 
under certain conditions on (semi)stability. 
\end{abstract}

\tableofcontents

\section*{Introduction}

For a connected 
quasi-projective smooth variety $X$ over an algebraically closed field 
$k$ of characteristic zero, it is proved by Malcev \cite{malcev} and 
Grothendieck \cite{grothendieck} that, 
if the etale fundamental group $\pi_1^{\et}(X)$ is trivial, 
any coherent $\cO_X$-module with integrable connection on $X$ over $k$ 
(which is equivalent to an $\cO_X$-coherent $\cD_X$-module on $X$) 
is trivial (isomorphic to a direct sum of $(\cO_X,d)$). 
The latter assertion is equivalent to the triviality of 
the de Rham fundamental group $\pi_1^{\dR}(X)$ defined as the Tannaka dual 
of the category of coherent $\cO_X$-modules with integrable connection 
on $X$ over $k$. So the above theorem can be interpreted as 
an interesting relation between $\pi_1^{\et}(X)$ and $\pi_1^{\dR}(X)$. 

As an analogue of this theorem in chatacteristic $p>0$, 
Esnault and Mehta \cite{esnaultmehta}, \cite{esnaultmehta2} 
proved a conjecture of 
Gieseker which says, 
for a connected 
projective smooth variety $X$ over an algebraically closed field 
$k$ of characteristic $p>0$ with trivial etale fundamental group, 
any stratified bundle on $X$ (which is equivalent to an $\cO_X$-coherent $\cD_X$-module on $X$) is trivial.  (In \cite{esnaultsrinivas}, the same statement 
for connected 
quasi-projective smooth $X$ is also proven under certain assumption.) 
The latter assertion is 
equivalent to the triviality of the stratified fundamental group 
$\pi_1^{\rm strat}(X)$ defined as the Tannaka dual 
of the category of stratified bundles on $X$, and the theorem 
reveals an interesting relation 
between $\pi_1^{\et}(X)$ and $\pi_1^{\rm strat}(X)$. 

As a $p$-adic version of the above 
theorem, there is the following conjecture, which is raised by de Jong 
according to private communication of the author with Esnault: 

\begin{conj}[de Jong]
For a connected 
projective smooth variety $X$ over an algebraically closed field 
$k$ of characteristic $p>0$ with trivial etale fundamental group, 
any convergent isocrystal on $X$ over $K$ $($where $K$ is the fraction 
field of a complete discrete valuation ring $V$ of mixed characteristic with 
residue field $k)$ is trivial. 
\end{conj}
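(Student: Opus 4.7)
The natural plan is to reduce the conjecture, via lifting to characteristic zero, to the classical theorem of Malcev and Grothendieck. Assume that $X$ admits a projective smooth lift $\fX$ over $V$. By Berthelot's formalism, a convergent isocrystal $\cE$ on $X/K$ can be described as a coherent module with integrable connection $(E,\nabla)$ on the rigid analytic generic fiber $\fX_K^{\an}$, subject to a convergence condition on tubes of points. Using rigid GAGA on the smooth projective formal scheme $\fX$, this pair descends to an algebraic coherent sheaf with integrable connection on the algebraic generic fiber $X_K := \fX \otimes_V K$. If one can establish the triviality of $(E,\nabla)$ on $X_K$, then passing back to the special fiber yields the triviality of $\cE$ on $X$.

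By Malcev-Grothendieck, $(E,\nabla)$ would be trivial on $X_K$ as soon as $\pi_1^{\et}(X_K)=1$. The key obstruction is that the hypothesis $\pi_1^{\et}(X)=1$ gives only that the specialization map $\pi_1^{\et}(X_{\ol K}) \twoheadrightarrow \pi_1^{\et}(X_{\ol k})$ has trivial target, so $\pi_1^{\et}(X_K)$ itself can in principle remain large. Without further input there is no way to propagate etale triviality from characteristic $p$ up to the generic fiber.

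The strategy to overcome this is to impose (semi)stability hypotheses on the vector bundle underlying $\cE$. Under semistability, the Chern classes of the bundle vanish (as expected from its origin in a characteristic $p$ isocrystal), and Langton-style extension results together with boundedness of semistable sheaves on $\fX/V$ should provide a well-controlled integral model of $(E,\nabla)$ over $V$. The goal of this extra control is to show that the underlying bundle, possibly after pullback along a finite etale covering of $X_K$, becomes trivial; such a covering specializes to a finite etale cover of $X$, which by $\pi_1^{\et}(X)=1$ and proper base change must be trivial. Triviality of the bundle then reduces the connection to a global matrix of $1$-forms, whose convergence and semistability properties should force it to vanish. The main obstacle will be precisely this last step: translating the (semi)stability input into effective control of the algebraic monodromy of the generic fiber connection, bridging the gap between $\pi_1^{\et}(X_K)$ and $\pi_1^{\et}(X)$ via a good integral structure for $(E,\nabla)$.
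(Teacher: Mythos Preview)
The statement you are trying to prove is a \emph{conjecture}; the paper does not prove it, and no proof is known. The paper establishes only partial results (its Theorems~1.7 and~1.9), under the standing assumption that $X$ lifts to $\fX/V$ \emph{and} under additional (semi)stability hypotheses on a $\mathrm{mod}\ \varpi$ lattice of $\cE$. Your proposal is therefore not a proof but a strategic outline, and should be assessed as such.

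You correctly isolate the main obstruction: from $\pi_1^{\et}(X)=1$ one only gets that the \emph{target} of the specialization surjection $\pi_1^{\et}(\ol{\bX}) \twoheadrightarrow \pi_1^{\et}(X)$ is trivial, so $\pi_1^{\et}(\ol{\bX})$ may still be a large pro-$p$ group and Malcev--Grothendieck cannot be applied directly to the generic fiber. The paper says exactly this in its introduction.

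Your proposed workaround, however, contains a genuine error. You write that a finite \'etale cover of $X_K$ on which the bundle trivializes ``specializes to a finite \'etale cover of $X$, which by $\pi_1^{\et}(X)=1$ \ldots\ must be trivial''. This is backwards: the specialization surjection says that finite \'etale covers of $X$ lift to $\ol{\bX}$, not that covers of $\ol{\bX}$ descend or specialize to finite \'etale covers of $X$. In general a finite \'etale cover of the generic fiber acquires bad (possibly wildly ramified or non-reduced) special fiber; this is precisely the phenomenon that allows $\pi_1^{\et}(\ol{\bX})$ to be nontrivial while $\pi_1^{\et}(X)=1$. So the very obstruction you identified reappears, unresolved, at the key step of your argument. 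The remaining steps (``semistability should force the matrix of $1$-forms to vanish'') are too vague to evaluate.

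For comparison, the paper's actual method is entirely different and never invokes Malcev--Grothendieck. It works on the \emph{special fiber}: a $\wh{\cD}^{(m)}$-lattice $\cE^{(m)}$ of $\cE$ has Frobenius antecedents $E^{(m)[j]}$ on $X^{[j]}$, and the (semi)stability hypothesis lets one apply Esnault--Mehta's solution of Gieseker's conjecture together with Langer's moduli of stable sheaves in characteristic $p$ to force $E^{(m)[j]} \cong \cO_X$ (or $\cO_X^r$) for many $j$. The triviality is then propagated from $\mathrm{mod}\ \varpi$ to $\mathrm{mod}\ \varpi^N$ by an explicit computation of how the deformation class in $H^1(X,\cO_X)^{r^2}$ transforms under Berthelot's level-raising Frobenius pullback, and finally to $\cE$ via Langton's theorem or Langer's moduli over $V$. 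The argument lives in characteristic $p$ and mixed characteristic throughout; the generic-fiber fundamental group plays no role.
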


As in the previous cases, the conclusion of the conjecture is 
equivalent to the triviality of the convergent fundamental group 
$\pi_1^{\rm conv}(X)$ defined as the Tannaka dual 
of the category of convergent isocrystals on $X$. 

In this paper, we discuss this conjecture when $X$ is liftable to 
a projective smooth scheme $\fX$ over $\Spec V$ 
(and $V$ admits a lift of absolute Frobenius on $k$).  
When the etale fundamental group $\pi_1^{\et}(\ol{\bX})$ 
of the generic geometric fiber $\ol{\bX}$ of $\fX$ is trivial, the conjecture 
in this case easily follows from 
the aforementioned result of Malcev and Grothendieck. 
But this does not imply the conjecture in general liftable case because 
we do not know in general whether $\pi_1^{\et}(\ol{\bX})$ is trivial or not
(although we know the triviality of its prime-to-$p$ quotients). 

We prove theorems (Theorem \ref{thm:1} and \ref{thm:2}) which 
roughly claim that, when $\cE$ admits very nice `mod $\varpi$ reductions' 
(where $\varpi$ is a uniformizer of $V$) 
which are $\mu$-stable or semistable sheaves on $X$, $\cE$ is trivial. 
As corollaries, 
we prove the triviality of $\cE$ when $\cE$ is of rank $1$ 
(Corollary \ref{cor:1}) or when $\cE$ admits a structure of 
a convergent $F$-isocrystal with `strongly semistable mod $\varpi$ reduction' 
(Corollary \ref{cor:2}, \ref{cor:22}). 

The rough idea of the proof is the following: 
First we prove the triviality of `$\cE$ modulo $\varpi$' 
using the result of Esnault-Mehta \cite{esnaultmehta} 
and 
the moduli of stable sheaves on $X$ constructed by Adrian Langer 
\cite{al1}, \cite{al2}. Then, we prove 
the triviality of `$\cE$ modulo $\varpi^N$' ($N \in \N$), which is 
a deformation of `$\cE$ modulo $\varpi$', 
by checking the behavior of deformation class under 
the level raising Frobenius pullback functor of Berthelot 
\cite{berthelotII}. Finally, we prove the triviality of $\cE$ 
by using Langton's theorem \cite{langton} or 
the moduli of semistable sheaves on $\fX$ 
constructed by Langer. 

The author is grateful to H\'el\`ene Esnault, because 
he started this study after the private communication with 
her on the above conjecture of de Jong. 
He is partly supported by JSPS 
Grant-in-Aid for Scientific Research (C) 25400008 and  
Grant-in-Aid for Scientific Research (B) 23340001. 

\section{Preliminaries and statement of results}
Throughout this paper, let $p$ be a fixed prime number, 
$k$ an algebraically closed field of characteristic $p$, 
$V$ a complete discrete valuation ring of mixed characteristic 
with residue field $k$ and $K$ the fraction field of 
$V$. Let $\varpi$ be a uniformizer of $V$. 
Denote the absolute ramification index of $V$ by $e$ and 
let $\fe$ be the minimal natural number with 
$e \leq p^{\fe}(p-1)$. 
We denote the absolute Frobenius morphism on 
$\Spec k$ by $\sigma$. 
We assume that there exists a lift 
$\sigma_V: \Spec V \lra \Spec V$ 
of $\sigma$ and fix it. 
We denote the $p$-adic completion 
$\Spf V \lra \Spf V$ of $\sigma_V$ by the same symbol. 
Denote the morphism $\Spec K \lra \Spec K$ induced by 
$\sigma_V$ by $\sigma_K$. 
Note that, when $V$ is the Witt ring $W(k)$ of $k$, the 
condition on the lift of Frobenius is satisfied and 
$\fe = 0$. 

Let $X$ be a projective smooth variety of dimension 
$d$ over $\Spec k$ and 
let $\cX$ be a projective smooth formal scheme over $\Spf V$ 
with $\cX \otimes_V k = X$. (Note that, throughout this paper, 
we assume the existence of such $\cX$.) 
By GFGA, $\cX$ naturally corresponds to 
a projective smooth scheme over $\Spec V$, which we denote by 
$\fX$. 
We denote its generic fiber by 
$\bX$ and its geometric generic fiber by $\ol{\bX}$. 
We assume that $X$ and $\ol{\bX}$ are connected. \par 

We denote the absolute Frobenius $X \lra X$ of $X$ by $F_{\rm abs}$. 
For $n \in \N$, 
denote $X \times_{\Spec k, \sigma^n} \Spec k$ 
(resp. $\fX \times_{\Spec V, \sigma_V^n} \Spec V$) by 
$X^{[n]}$ (resp. $\fX^{[n]}$) and the $p$-adic completion of 
$\fX^{[n]}$ by $\cX^{[n]}$. 
$X^{[n]}$ (resp. $\fX^{[n]}$, $\cX^{[n]}$) is isomorphic 
to $X$ (resp. $\fX$, $\cX$) 
as schemes (resp. schemes, formal schemes), but we prefer to 
use this notation to avoid confusion. 

Denote the projection $X^{[i+1]} \lra X^{[i]} \, (i \in \N)$ by 
$\pi$ and denote the $n$-times iteration of 
projections $X^{[i+n]} \lra X^{[i]} \, (i \in \N)$ by $\pi^n$. Also, 
denote the 
relative Frobenius morphism $X^{[i]} \lra X^{[i+1]} \, (i \in \N)$ 
by $F$ and denote the $n$-times iteration of 
 the relative Frobenius morphisms $X^{[i]} \lra X^{[i+n]} \, (i \in \N)$ 
by $F^n$. Then $F^n \circ \pi^n = F_{\rm abs}^n$ 
and $F$ and $\pi$ are `commutative' in suitable sense as long as 
they are defined. 
Denote also the projection $\fX^{[i+1]} \lra \fX^{[i]} \, (i \in \N)$ by 
$\pi$ and denote the $n$-times iteration of 
projections $\fX^{[i+n]} \lra \fX^{[i]} \, (i \in \N)$ by $\pi^n$. 
However, we do not assume the existence of a lift of 
a relative or absolute Frobenius morphism on $X^{[i]}$ to that on 
$\fX^{[i]}$. 

We fix an ample line bundle $\cO_{\fX}(1)$ on $\fX$. By restriction, 
this induces an ample line bundle on $X, \bX$, which we denote by 
$\cO_{X}(1)$, $\cO_{\bX}(1)$, respectively. 
Also, by the pullback by $\pi^n$, this induces 
an ample line bundle on $X^{[n]}, \bX^{[n]}$, which we denote by 
$\cO_{X^{[n]}}(1)$, $\cO_{\bX^{[n]}}(1)$, respectively. 
%%%
%For 
%a coherent $\cO_{\fX}$-module $\fE$, we denote 
%$\fE \otimes_{\cO_{\fX}} \cO_{\fX}(1)^{\otimes n} \,(n \in \Z)$ by 
%$\fE(n)$, and similar notation is used also for 
%coherent $\cO_{\cX}$-modules, coherent $\cO_{X}$-modules and 
%coherent $\cO_{\bX}$-modules. \par 
%%%%%
When we consider the slope $\mu(\cF)$ or the reduced 
Hilbert polynomial $p_{\cF}$ of 
a torsion free sheaf $\cF$ on $X^{[n]}$ (resp. $\bX^{[n]}$), we 
always consider them 
with respect to $\cO_{X^{[n]}}(1)$ (resp. $\cO_{\bX^{[n]}}(1)$). 
(For the definition of $\mu(\cF)$ and $p_{\cF}$, see 
\cite[Definition 1.2.11, 1.2.3]{hl}.) 
Therefore, when we consider (semi)stability 
(also known as Gieseker (semi)stability) 
and $\mu$-(semi)stability, we consider them 
with respect to $\cO_{X^{[n]}}(1)$ (resp. $\cO_{\bX^{[n]}}(1)$).

We give a review of rings of $p$-adic differential operators 
(arithmetic $D$-modules) 
defined by Berthelot \cite{berthelotI}, in our setting. 
Let $\cX$ be as above and let 
$\cD(1)_{(m)}$ (resp. $\cD(2)_{(m)}$) be the 
$p$-adically 
completed $m$-PD-envelope of $\cX$ in 
$\cX \times_{V} \cX$ (resp. 
$\cX \times_{V} \cX \times_V \cX$). 
Then $\cI := \Ker (\cO_{\cD(1)_{(m)}} \lra \cO_{\cX})$ is 
endowed with an $m$-PD-structure and so we can define 
the ideal $\cI^{\{ n \}} \,(n \in \N)$ as in \cite[1.3.7]{berthelotI}. 
We denote by $\cD(1)_{(m)}^n$ the closed formal subscheme of 
$\cD(1)_{(m)}$ defined by $\cI^{\{ n \}}$. 
Then we define $\wh{\cD}^{(m)}_{\cX}$ by 
$\cD^{(m)}_{\cX/p^N,n} := \Hom_{\cO_{\cX}}(\cO_{\cD(1)_{(m)}}/p^N\cO_{\cD(1)_{(m)}}, \cO_{\cX}/p^N \cO_{\cX}), 
\cD^{(m)}_{\cX/p^N} := \bigcup_{n \in \N} \cD^{(m)}_{\cX/p^N,n}$ and 
$\wh{\cD}^{(m)}_{\cX} := \varprojlim_{N \in \N}\cD^{(m)}_{\cX/p^N}. $ 
The morphism 
$$ \cD(1)_{(m)} \times_{\cX} \cD(1)_{(m)} \cong 
\cD(2)_{(m)} \longrightarrow \cD(1)_{(m)}$$ 
(the isomorphism follows from the explicit description of 
$m$-PD envelope given in \cite[1.5]{berthelotI}) 
induced by the projection $\cX \times_V \cX \times_V \cX 
\lra \cX \times_V \cX$ to the first and the third factors 
naturally defines the $\cO_{\cX}$-algebra 
structure on $\wh{\cD}^{(m)}_{\cX}$. 
We put $\wh{\cD}^{(m)}_{\cX,\Q} := \Q \otimes_{\Z} \wh{\cD}^{(m)}_{\cX}$ and 
finally we define $\cD_{\cX,\Q}^{\dagger}$ by 
$\cD_{\cX,\Q}^{\dagger} = \varinjlim_m \wh{\cD}_{\cX,\Q}^{(m)}$. 
Note that 
the above construction works also when $\cX$ is replaced by 
$\cX^{[i]} \,(i \in \N)$.

Let $\cE$ be a convergent isocrystal on $X/K$. 
In terms of arithmetic $D$-modules, it is nothing but 
an 
$\cO_{\cX,\Q}$-coherent $\cD_{\cX,\Q}^{\dagger}$-module 
$\cE$, where 
$\cO_{\cX,\Q} := \Q \otimes_{\Z} \cO_{\cX}$ 
(\cite[4.1.4]{berthelotI}). 
Let us denote the coherent $\cO_{\bX}$-module corresponding to $\cE$ 
via GFGA by $\bE$. (Then $\bE$ is known to be locally free.) 
For each $m \in \N$, $\cE$ naturally has a structure of 
$\cO_{\cX,\Q}$-coherent quasi-nilpotent 
$\wh{\cD}^{(m)}_{\cX,\Q}$-module. 
The next proposition assures the existence of a certain 
$\cO_{\cX}$-coherent lattice of $\cE$: 

\begin{prop}\label{prop:lattice}
Let the notations be as above. 
Then, there exists a $p$-torsion free 
$\cO_{\cX}$-coherent quasi-nilpotent 
$\wh{\cD}_{\cX}^{(m)}$-module $\cE^{(m)}$ with 
$\Q \otimes_{\Z} \cE^{(m)} = \cE$ as $\wh{\cD}_{\cX,\Q}^{(m)}$-modules. 
\end{prop}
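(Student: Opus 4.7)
The plan is to produce an arbitrary $\cO_{\cX}$-coherent lattice of $\cE$, enlarge it to a $\wh{\cD}^{(m)}_{\cX}$-stable one by closing up under the differential operators, and use the quasi-nilpotency of the $\wh{\cD}^{(m)}_{\cX,\Q}$-action to show that this enlargement remains $\cO_{\cX}$-coherent.

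First I would construct an initial lattice. By GFGA applied to $\fX$, the coherent $\cO_{\bX}$-module $\bE$ associated to $\cE$ extends to a coherent $\cO_{\fX}$-module; its $p$-adic completion on $\cX$, modulo its $p$-power torsion, gives a $p$-torsion free $\cO_{\cX}$-coherent subsheaf $\cE_0 \subset \cE$ with $\Q \otimes_{\Z} \cE_0 = \cE$. I would then set
\[
\cE^{(m)} := \wh{\cD}^{(m)}_{\cX} \cdot \cE_0 \subset \cE,
\]
the $\wh{\cD}^{(m)}_{\cX}$-submodule generated by $\cE_0$. By construction $\cE^{(m)}$ is $\wh{\cD}^{(m)}_{\cX}$-stable, contains $\cE_0$ (so $\Q \otimes_{\Z} \cE^{(m)} = \cE$), is $p$-torsion free as a subsheaf of the $\cO_{\cX,\Q}$-module $\cE$, and inherits quasi-nilpotency from $\cE$. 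The only nontrivial assertion is the $\cO_{\cX}$-coherence of $\cE^{(m)}$.

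To prove coherence I would argue locally. Cover $\cX$ by finitely many affine opens $\cU$ on each of which local coordinates $x_1,\dots,x_d$ exist, so that $\wh{\cD}^{(m)}_{\cX}|_{\cU}$ is topologically free over $\cO_{\cU}$ on the basis $\{\underline{\partial}^{\langle \k \rangle}_{(m)}\}_{\k \in \N^d}$. For a local section $e$ of $\cE_0|_{\cU}$, quasi-nilpotency of $\cE$ as a $\wh{\cD}^{(m)}_{\cX,\Q}$-module gives $\underline{\partial}^{\langle \k \rangle}_{(m)}(e) \to 0$ $p$-adically in $\cE$: all but finitely many $\k$ force the image into $\cE_0$, while the finitely many remaining terms lie in $p^{-N_e}\cE_0$ for some $N_e$. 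Applying this to the finite set of local generators of $\cE_0|_{\cU}$ and maximizing over the finite cover yields an integer $N$ with $\wh{\cD}^{(m)}_{\cX}(\cE_0) \subset p^{-N}\cE_0$. Since any $P = \sum a_\k \underline{\partial}^{\langle \k \rangle}_{(m)} \in \wh{\cD}^{(m)}_{\cX}$ is a $p$-adically convergent sum with $a_\k \in \cO_{\cX}$ and $p^{-N}\cE_0$ is $p$-adically closed in $\cE$, this gives $\cE^{(m)} \subset p^{-N}\cE_0$. As $\cO_{\cX}$ is a sheaf of Noetherian rings and $p^{-N}\cE_0$ is $\cO_{\cX}$-coherent, the submodule $\cE^{(m)}$ is coherent.

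The main obstacle, and the step I would need to check against Berthelot's definitions most carefully, is the passage from quasi-nilpotency of the $\wh{\cD}^{(m)}_{\cX,\Q}$-module $\cE$ to the pointwise $p$-adic convergence $\underline{\partial}^{\langle \k \rangle}_{(m)}(e) \to 0$. If that convergence only holds after inverting $p$, I would instead extract a uniform bound on the denominators of the level-$m$ PD stratification $\epsilon\colon p_2^*\cE \xrightarrow{\sim} p_1^*\cE$ over $\cD(1)_{(m)}\otimes\Q$ by a coherence argument (using that $\cD(1)_{(m)}$ is built from the noetherian $\cX$ and that $\cE$ is $\cO_{\cX,\Q}$-coherent), and then read off the required bound on the individual $\underline{\partial}^{\langle \k \rangle}_{(m)}(e)$ via the order filtration on $\cD(1)_{(m)}$; either way the coherence of $\cE^{(m)}$ follows.
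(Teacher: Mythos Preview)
Your approach is correct in outline and genuinely different from the paper's. You \emph{enlarge} an arbitrary lattice $\cE_0$ to $\wh{\cD}^{(m)}_{\cX}\cdot\cE_0$ and control its size via a uniform denominator bound; the paper instead \emph{shrinks} an arbitrary lattice $\cF'$ to $\cF:=\theta^{-1}(p_1^*\cF')$, where $\theta(x)=\epsilon(p_2^*x)$ for the $m$-HPD-stratification $\epsilon$, and then uses the cocycle condition $p_{12}^*\epsilon\circ p_{23}^*\epsilon=p_{13}^*\epsilon$ to prove $\theta(\cF)\subset p_1^*\cF$ (this is the level-$m$ version of an argument of Ogus--Gabber). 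The paper's route never needs to discuss convergence of infinite sums of differential operators or to invoke $p$-adic topologies on submodules: one checks directly that $\epsilon|_{p_2^*\cF}$ lands in $p_1^*\cF$ and is an isomorphism, so the HPD-stratification (hence the quasi-nilpotent $\wh{\cD}^{(m)}_{\cX}$-structure) is built in from the start. Your route is perhaps more intuitive but forces you to verify separately that the resulting coherent submodule is complete, closed, and inherits quasi-nilpotency.

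Regarding your stated obstacle: you are right to be suspicious of deducing integral $p$-adic convergence $\underline{\partial}^{\langle \k \rangle}_{(m)}(e)\to 0$ directly from quasi-nilpotency over $\Q$, since $\cE$ carries no intrinsic $p$-adic topology. Your fallback is the correct fix, and in fact is quite short: $p_2^*\cE_0$ is finitely generated over $\cO_{\cD(1)_{(m)}}$, and since $p_1^*\cE_0$ is $p$-torsion free (flatness of $p_1$), each generator of $\epsilon(p_2^*\cE_0)\subset \Q\otimes p_1^*\cE_0$ lies in some $p^{-N_i}p_1^*\cE_0$; taking the maximum and using quasi-compactness of $\cX$ gives a global $N$ with $\epsilon(p_2^*\cE_0)\subset p^{-N}p_1^*\cE_0$, whence $\underline{\partial}^{\langle \k \rangle}_{(m)}(\cE_0)\subset p^{-N}\cE_0$ and $p^N\underline{\partial}^{\langle \k \rangle}_{(m)}(e)\to 0$ in $\cE_0$ for each $e$. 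From there your coherence argument goes through; just be explicit that $\cE^{(m)}$, being a coherent $\cO_{\cX}$-submodule of $p^{-N}\cE_0$, is $p$-adically closed (Artin--Rees), so it is genuinely stable under the completed ring $\wh{\cD}^{(m)}_{\cX}$ and inherits the topological nilpotence needed for quasi-nilpotency.
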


To prove it, we recall the notion of 
$m$-HPD-stratification 
%% (\cite[\S7]{ogus2}, \cite[4.3]{shihoi}) 
in our setting. 

\begin{defn}\label{mahah}
Let $\cX, \cD(i)_{(m)} \,(i=1,2)$ be as above. 
Also, let $p_i: \cD(1)_{(m)} \longrightarrow \cX \: (i=1,2)$, 
$p_{ij}: \cD(2)_{(m)} \longrightarrow \cD(1)_{(m)} \: (1 \leq i < j \leq 3)$ 
be the projections and let 
$\Delta: \cX \longrightarrow \cD(1)_{(m)}$ be the diagonal map. 
Then we define an $m$-HPD-stratification on a coherent $\cO_{\cX}$-module 
or a coherent $\cO_{\cX,\Q}$-module $\cE$ as an 
$\cO_{\cD(1)_{(m)}}$-linear isomorphism 
$\epsilon: p_2^*\cE \longrightarrow p_1^*\cE$ satisfying 
$\Delta^*(\epsilon) = \id$ and 
$p_{12}^*(\epsilon) \circ p_{23}^*(\epsilon) =
p_{13}^*(\epsilon)$. 
\end{defn}

Then, it is known (follows easily from \cite[2.3.7]{berthelotI}) 
that, for a coherent $\cO_{\cX}$-module (resp. 
a coherent $\cO_{\cX,\Q}$-module) $\cE$, giving 
a structure of quasi-nilpotent $\wh{\cD}^{(m)}_{\cX}$-module on $\cE$ 
(resp. a quasi-nilpotent 
$\wh{\cD}^{(m)}_{\cX,\Q}$-module on $\cE$) 
is equivalent to giving a structure of 
$m$-HPD-stratification on $\cE$. 
Therefore, to prove Proposition \ref{prop:lattice}, 
it suffices to prove the following: 

\begin{lem}\label{hahaha}
Let $\cE$ be a coherent $\cO_{\cX,\Q}$-module endowed with 
an $m$-HPD-stratification $\epsilon: p_2^*\cE \longrightarrow p_1^*\cE$. 
Then there exists a $p$-torsion free coherent 
$\cO_{\cX}$-module $\cF$ with $\Q \otimes_{\Z} \cF = \cE$ 
such that $\epsilon_{\cF} := \epsilon|_{p_2^*\cF}$ induces the 
HPD-stratification $p_2^*\cF \lra p_1^*\cF$ on $\cF$. 
\end{lem}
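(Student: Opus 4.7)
The plan is to use the equivalence (recalled just before the lemma) between $m$-HPD-stratifications on a coherent $\cO_{\cX}$-module (resp.\ $\cO_{\cX,\Q}$-module) and quasi-nilpotent $\wh{\cD}^{(m)}_{\cX}$-module (resp.\ $\wh{\cD}^{(m)}_{\cX,\Q}$-module) structures. Under this equivalence, the lemma reduces to finding a $p$-torsion free coherent $\cO_{\cX}$-submodule $\cF \subset \cE$ with $\Q \otimes_{\Z} \cF = \cE$ which is stable under the $\wh{\cD}^{(m)}_{\cX,\Q}$-action on $\cE$ determined by $\epsilon$.

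First I would produce some coherent $\cO_{\cX}$-lattice $\cF_0 \subset \cE$ with $\Q \otimes_{\Z} \cF_0 = \cE$. This is routine from the $\cO_{\cX,\Q}$-coherence of $\cE$ and the quasi-compactness of $\cX$: pick local generators of $\cE$, realize them as sections of $\cE$ after clearing denominators, and glue. Such $\cF_0$ is automatically $p$-torsion free, as it embeds into the $\Q$-module $\cE$. The natural candidate for $\cF$ is then the $\wh{\cD}^{(m)}_{\cX}$-submodule of $\cE$ generated by $\cF_0$, namely $\cF := \wh{\cD}^{(m)}_{\cX} \cdot \cF_0$. By construction this is $\wh{\cD}^{(m)}_{\cX}$-stable, contains $\cF_0$, is $p$-torsion free, and satisfies $\Q \otimes_{\Z} \cF = \cE$.

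The main obstacle will be to show that this $\cF$ is $\cO_{\cX}$-coherent, since the ring $\wh{\cD}^{(m)}_{\cX}$ is very far from being $\cO_{\cX}$-coherent. To handle it I would work locally on an affine open $\Spf A \subset \cX$ admitting formal \'etale coordinates. There $\wh{\cD}^{(m)}_{\cX}$ has Berthelot's explicit description by a topological basis of divided-power differential operators $\partial^{\langle k \rangle}$ dual to the $m$-PD generators of $\cO_{\cD(1)_{(m)}}$, and the stratification $\epsilon$ translates into a Taylor-type formula for the $\wh{\cD}^{(m)}_{\cX,\Q}$-action on $\cE$. Using the $\cO_{\cX,\Q}$-coherence of $\cE$ (so that locally one is dealing with only finitely many generators of $\cF_0$) together with the quasi-nilpotence of the action (which implies that, for each such generator $e$, $\partial^{\langle k \rangle}e$ lies in $\cF_0$ for all but finitely many $k$), one aims to produce an integer $N \geq 0$ with $\partial^{\langle k \rangle}\cF_0 \subset p^{-N}\cF_0$ uniformly in $k$. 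Quasi-compactness of $\cX$ then upgrades $N$ to a single integer that works on all of $\cX$, and this inclusion forces $\cF \subset p^{-N}\cF_0$, so $\cF$ is $\cO_{\cX}$-coherent.

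Once $\cO_{\cX}$-coherence of $\cF$ is in hand, the equivalence applied to the quasi-nilpotent $\wh{\cD}^{(m)}_{\cX}$-module $\cF$ produces an $m$-HPD-stratification $p_2^*\cF \to p_1^*\cF$, which by construction is the restriction $\epsilon|_{p_2^*\cF}$, finishing the argument.
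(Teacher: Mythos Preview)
Your approach is workable but takes a genuinely different route from the paper. The paper follows the Ogus--Gabber rigid-analytic descent technique: starting from an arbitrary lattice $\cF'$, it sets $\theta(x) := \epsilon(p_2^*x)$ and defines $\cF := \theta^{-1}(p_1^*\cF')$, which is a \emph{sublattice} of $\cF'$ (hence coherence is immediate). The cocycle condition for $\epsilon$ is then used directly to show $\theta(\cF)\subset p_1^*\cF$, and a Nakayama-type argument on $\cD(1)_{(m)}$ gives that $\epsilon_{\cF}$ is an isomorphism. No passage through differential operators occurs. Your construction instead enlarges the lattice to $\cF := \wh{\cD}^{(m)}_{\cX}\cdot\cF_0$ and bounds it from above by $p^{-N}\cF_0$; this trades the cocycle computation for a convergence/boundedness argument in local coordinates. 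The paper's route is more self-contained (it needs only the stratification axioms) and yields the isomorphy of $\epsilon_{\cF}$ as a byproduct, while yours leans on the dictionary with $\wh{\cD}^{(m)}$-modules.

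Two points in your sketch deserve tightening. First, the implication you attribute to ``quasi-nilpotence'' is really the statement that $\theta(e)\in p_1^*\cE=(p_1^*\cF_0)[1/p]$, hence $\theta(e)\in p^{-N}p_1^*\cF_0$ for some $N$; unpacking the Taylor expansion then gives both $\partial^{\langle k\rangle}e\in p^{-N}\cF_0$ for all $k$ and $\partial^{\langle k\rangle}e\to 0$. This is the content you need, and it is a property of the stratification rather than an abstract nilpotence hypothesis. Second, at the end you invoke the equivalence for ``the quasi-nilpotent $\wh{\cD}^{(m)}_{\cX}$-module $\cF$'' without checking quasi-nilpotence. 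You do need this: it amounts to showing $\partial^{\langle k\rangle}f\to 0$ in $\cF$ for every $f\in\cF$, which follows from the bound $\cF\subset p^{-N}\cF_0$ together with the convergence already established on $\cF_0$. Once that is in hand, the equivalence produces an $m$-HPD-stratification on $\cF$ which, by functoriality, is $\epsilon|_{p_2^*\cF}$ and is automatically an isomorphism.
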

        
\begin{proof}
The proof is the level $m$ version of \cite[(0.7.4)]{ogus2}, which is based on 
the technique of the 
proof of rigid analytic faithfully flat descent due to O. Gabber 
(cf. \cite[(0.7.2)]{ogus2}, \cite[(1.9)]{ogus1}). 
In this proof, we denote the $p$-adically completed
 tensor product of modules by $\hat{\otimes}$ and use the symbol $\otimes$ 
only for usual tensor products. \par 
Take any $p$-torsion free coherent 
$\cO_{\cX}$-module $\cF'$ with $\Q \otimes_{\Z} \cF' = \cE$, and 
we define the map $\theta: \cE \longrightarrow p_1^*\cE$ by 
$\theta(x) = \epsilon(p_2^*x)$. 
By the description of $m$-PD envelope given in 
\cite[1.5]{berthelotI}, 
the maps $p_i:\cD(1)_{(m)} \longrightarrow \cX$ are flat.
 Hence $p_1^*\cF' \subset p_1^*\cE$. Put 
$\phi':= \theta \vert_{\cF'}$ and
 let $\cF$ be $\theta^{-1}(p_1^*\cF')$. \par
First check the inclusion $\cF \subseteq \cF'$. Let 
$\Delta_{\cE}: p_1^*\cE \longrightarrow \cE$
 be the map defined by 
$x\otimes \gamma \mapsto x\Delta^*(\gamma)$ for $x \in \cE$ and 
$\gamma \in {\cal O}_{\cD(1)_{(m)}}$. Then $\Delta_{\cE} \circ \theta = \id$ 
and $\Delta_{\cE}$ sends $p_1^*\cF'$ into $\cF'$. 
If $x$ is a local section of $\cF$,
 $\theta(x) \in p_1^*\cF'$ and so $ x = 
\Delta_{\cE} \circ \theta(x) \in \cF'$. 
Hence $\cF \subseteq \cF'$. \par 
Next we prove the coherence of $\cF$. 
For an affine open formal subscheme $\cU = \Spf A \subseteq \cX$, 
we denote $\Gamma(\cU,\cF), \Gamma(\cU,\cF'), \Gamma(\cU,\cE)$ by 
$F_A, F'_A, E_A$, respectively. Then, for any such $A$, 
$F'_A$ is a finitely generated module over the Noetherian ring $A$ and 
$F_A \subseteq F'_A$. So $F_A$ is a finitely generated 
$A$-module. Next, let us take affine open formal subschemes
$\cU' = \Spf A' \subseteq \cU = \Spf A \subseteq \cX$, and 
denote $\cU \times_{\cX} \cD(i)_{(m)}$ 
(resp. $\cU' \times_{\cX} \cD(i)_{(m)}$), 
which is an affine open formal subscheme of $\cD(i)_{(m)}$, by 
$\Spf B(i)$ (resp. $\Spf B'(i)$). Then we have 
{\allowdisplaybreaks{
\begin{align*} 
F_{A'} & = \theta_{A'}^{-1}(F'_{A'} \otimes_{A'} (B(1) \hat{\otimes}_A A')) \\
& = 
\Ker(E_{A'} \os{\theta_{A'}}{\lra} 
E_{A'} \otimes_{A'} (B(1) \hat{\otimes}_A A') \lra 
(E_{A'}/F'_{A'}) \otimes_{A'} (B(1) \hat{\otimes}_A A')) \\ 
& = 
\Ker(E_{A'} \os{\theta_{A'}}{\lra} 
E_{A'} \otimes_{A'} (B(1) \hat{\otimes}_A A') \lra 
(E_{A'}/F'_{A'}) \otimes_{A'} (B(1) \otimes_A A'))  \\ 
& \hspace{10cm} \text{($E_{A'}/F'_{A'}$ is $p$-torsion)} \\ 
& = 
\Ker(E_A \otimes_A A' \os{\theta_A \otimes_A A'}{\lra} 
E_{A} \otimes_{A} B(1) \otimes_A A' \lra 
(E_{A}/F'_{A}) \otimes_{A} B(1) \otimes_A A') \\ 
& = 
\Ker(E_A \os{\theta_A}{\lra} E_A \otimes_A B(1) \lra 
(E_A/F'_A) \otimes_A B(1)) \otimes_A A' \\ 
& = 
\theta_A^{-1}(F'_A \otimes_A B(1)) \otimes_A A' 
= F_A \otimes_A A'. 
\end{align*}}}
%%(Note that, since $B(1)$ is flat over 
%%$A$ and $E_A$ is isocoherent, $B(1) \otimes_A E_A$ is isomorphic to 
%%$B(1) \hat{\otimes}_A E_A$ and so on. See for example 
%%\cite[7.1.6]{gr}.)
So $\cF$ is coherent, as desired. \par 
Next we prove that $\theta(\cF)$ is contained in $p_1^*\cF$. 
Since $p_1$ is flat, $p_1^*\cF \subseteq p_1^*\cF'$.
Hence we have to show that the map $\phi = \theta \vert_{\cF}: \cF 
\longrightarrow p_1^*\cF'$ factors through $p_1^*\cF$. 
Since the assertion is local, we may check it on 
an open affine formal subscheme 
$\cU = \Spf A \subseteq \cX$. 
Let $F_A, F'_A, E_A, B(i)$ be as in the previous paragraph, and denote 
the composite 
$B(1) \os{p_{13}^*}{\longrightarrow} B(2) \cong B(1) \hat{\otimes}B(1)$ 
by $\delta$. 
\begin{comment}
Then the map $B(1) \hat{\otimes}_A B(1) \lra B(2)$ induced by 
the isomorphism 
$(A \hat{\otimes}_W A) \hat{\otimes}_A (A \hat{\otimes}_W A) \lra 
A \hat{\otimes}_W A \hat{\otimes}_W A$
 is also an isomorphism. (This follows from the explicit description 
of $B(i)$ given in \cite{berthelotI}. Denote its inverse by 
$\beta: B(2) \lra B(1) \hat{\otimes}_A B(1)$, and denote the composite 
$B(1) \os{p_{13}^*}{\longrightarrow} B(2)
 \overset{\beta}{\longrightarrow} B(1) \hat{\otimes}B(1)$ 
by $\delta$. 
\end{comment} 
Then, by cocycle condition, the following diagram is commutative:
\begin{equation}\label{keima}
\begin{CD}
 E_A @>\theta>> E_A \otimes_A B(1) \\
@V{\theta}VV @VV{\id \otimes \delta}V \\
E_A \otimes_A B(1) @>>{\theta \otimes \id}> 
E_A \otimes_A (B(1) \hat{\otimes} B(1)). 
\end{CD}
\end{equation}
  
Consider the following diagram:

\begin{equation*}
\begin{CD}
@. F_A \otimes_A B(1) @>{\phi \otimes \id}>> 
F'_A \otimes_A B(1) \otimes_A B(1) \\
@. @VVV @VVV \\
F_A @>\phi>> F'_A \otimes_A B(1) @>{\phi' \otimes \id}>> 
E_A \otimes_A B(1) \otimes_A B(1) \\
@VVV @VVV @\vert \\
E_A @>\theta>> E_A \otimes_A B(1) @>{\theta \otimes \id}>> 
E_A \otimes_A B(1) \otimes_A B(1). 
\end{CD}
\end{equation*}

By definition, the square on the bottom left is Cartesian. Since $p_1$
 is flat, the large rectangle on the right is also Cartesian. Thus it
 suffices to prove that the composition
 $$ F_A \rightarrow E_A \overset{\theta}{\rightarrow}
    E_A \otimes_A B(1) \overset{\theta \otimes \id}{\rightarrow}
    E_A \otimes_A B(1) \otimes_A B(1) 
    \overset{\pr \otimes \id}{\rightarrow} E_A/F'_A \otimes_A B(1) 
    \otimes_A B(1) $$
is the zero map. Since $E_A/F'_A$ is $p$-torsion, the natural map
  $$ E_A/F'_A \otimes_A B(1) \otimes_A B(1)
    \longrightarrow E_A/F'_A \otimes_A (B(1) \hat{\otimes}_A B(1))$$
 is isomorphic, so it suffices to show that our map becomes zero after
 we follow it with this isomorphism. If $x \in F_A$, then 
$\theta(x) \in F'_A \otimes_A B(1)$ holds, and so  
$(\theta \otimes \id)(\theta(x)) = (\id \otimes \delta)(\theta(x)) \in
 F'_A \otimes_A (B(1) \hat{\otimes}_A B(1))$ by the commutative diagram 
 (\ref{keima}). This proves the assertion that 
 $\theta(\cF)$ is contained in $p_1^*\cF$. \par 
By the above assertion, 
$\theta$ induces a morphism 
$\epsilon_{\cF} : p_2^*\cF \lra p_1^*\cF$ with $\Q \otimes 
\epsilon_{\cF} = \epsilon$. Finally we prove that 
$\epsilon_{\cF}$ is an isomorphism. 
To see this, we may work on 
an open affine formal subscheme 
$\cU = \Spf A \subseteq \cX$. 
Let $F_A, B(1)$ be as before and 
let us consider the morphism 
$\epsilon_A := \epsilon_{\cF} |_{\cU} : 
B(1) \otimes_A F_A \os{\cong}{\lra} F_A \otimes_A B(1)$. 
Let us put 
$C:=\Coker(\epsilon_A)$. 
%%Since we have 
%%$B(1) \otimes_A F_A = B(1) \hat{\otimes}_A F_A$ and 
%%$F_A \otimes_A B(1) = F_A \hat{\otimes}_A B(1)$, 
It suffices to prove that 
$C/p^nC=0$ for any $n$. Note that $A \otimes_{\Delta^*,B(1)} \epsilon_A$ 
is, by definition, the identity map $F_A \lra F_A$. So we have 
$A \otimes_{\Delta^*,B(1)} C = 0$, hence 
$(A/p^nA) \otimes_{\Delta^*, B(1)/p^nB(1)} (C/p^nC)=0$. Since 
$\Ker(\Delta^*: B(1)/p^nB(1) \lra A/p^nA)$ is a nil-ideal and $C/p^nC$ is 
finitely generated, it implies that 
$C/p^nC = 0$. So $\epsilon_A$ is an isomorphism and we are done. 
\end{proof}

Let us go back to the situation before Proposition \ref{prop:lattice}. 
We can prove a slightly stronger assertion than Proposition \ref{prop:lattice}. 
\begin{prop}\label{prop:lattice2}
Let the notations be as above. 
Then, we can take $\cE^{(m)}$ in Proposition \ref{prop:lattice} 
so that $E^{(m)} = \cE^{(m)}/\varpi \cE^{(m)}$ is a torsion free 
$\cO_X$-module. 
\end{prop}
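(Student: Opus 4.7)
The plan is to construct $\cE^{(m)}$ satisfying Proposition \ref{prop:lattice2} by starting from any lattice $\cE^{(m)}_0$ produced by Proposition \ref{prop:lattice} and iteratively performing a Langton-style elementary modification which kills the $\cO_X$-torsion of the $\varpi$-reduction while preserving the $\wh{\cD}^{(m)}_{\cX}$-structure; termination of this process is then the crucial input from \cite{langton}.

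The key observation I would establish first is that the $\cO_X$-torsion subsheaf $T_i$ of $E^{(m)}_i := \cE^{(m)}_i/\varpi\cE^{(m)}_i$ is automatically stable under the $\wh{\cD}^{(m)}_{\cX}$-action inherited from $\cE^{(m)}_i$. (The submodule $\varpi\cE^{(m)}_i$ is itself $\wh{\cD}^{(m)}_{\cX}$-stable since $\varpi \in V$ commutes with every differential operator, so $E^{(m)}_i$ genuinely inherits a $\wh{\cD}^{(m)}_{\cX}$-module structure.) Indeed, since $X$ is smooth and equidimensional, $T_i$ is exactly the subsheaf of local sections whose support has codimension $\geq 1$ in $X$, and any level-$m$ differential operator is local and hence preserves supports. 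Consequently the composition $\cE^{(m)}_i \twoheadrightarrow E^{(m)}_i \twoheadrightarrow E^{(m)}_i/T_i$ is $\wh{\cD}^{(m)}_{\cX}$-linear, and I would set
$$\cE^{(m)}_{i+1} := \Ker\bigl( \cE^{(m)}_i \twoheadrightarrow E^{(m)}_i \twoheadrightarrow E^{(m)}_i/T_i \bigr),$$
a $\wh{\cD}^{(m)}_{\cX}$-stable $\cO_{\cX}$-coherent submodule of $\cE^{(m)}_i$ which is $p$-torsion free (as a submodule of a $p$-torsion free module) and contains $\varpi\cE^{(m)}_i$, hence still satisfies $\Q \otimes_{\Z} \cE^{(m)}_{i+1} = \cE$ as $\wh{\cD}^{(m)}_{\cX,\Q}$-modules.

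The main obstacle is to show that the resulting descending chain $\cE^{(m)}_0 \supseteq \cE^{(m)}_1 \supseteq \cdots$ stabilizes, that is, $T_N = 0$ for some $N$; once that holds, setting $\cE^{(m)} := \cE^{(m)}_N$ completes the proof. This stabilization is the content of Langton's theorem \cite{langton}, applied (via formal GAGA) to the algebraization of $\cE^{(m)}_0$ on the projective $V$-scheme $\fX$: for a $V$-flat coherent sheaf on $\fX$ with torsion-free generic fiber, the iteration of the above elementary modification terminates after finitely many steps. Since $\bE$ is locally free (hence torsion-free) on $\bX$, Langton's hypothesis is satisfied, and the plan is complete.
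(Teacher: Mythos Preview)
Your iterative approach via elementary modifications differs from the paper's one-step construction. The paper algebraizes $\cE^{(m)}$ to a coherent $\cO_{\fX}$-module $\fE^{(m)}$ carrying a $\cD^{(m)}_{\fX}$-action, then replaces $\fE^{(m)}$ by $j_*j^*\fE^{(m)}$, where $j\colon\fU\hookrightarrow\fX$ is the inclusion of the locally free locus (which has complementary codimension $\geq 2$); this replacement is reflexive, hence $S_2$, so its reduction modulo $\varpi$ is $S_1$ and therefore torsion free, and the $\cD^{(m)}_{\fX}$-action survives because $j_*$ and $j^*$ for an open immersion are compatible with differential operators.

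Your argument has a genuine gap at the termination step. Langton's theorem in \cite{langton} concerns the elementary modification with respect to the \emph{maximal destabilizing subsheaf}, and its termination rests on a slope argument specific to the Harder--Narasimhan filtration; it says nothing about the modification by the torsion subsheaf. From your construction one obtains $0\to E^{(m)}_i/T_i\to E^{(m)}_{i+1}\to T_i\to 0$, hence an injection $T_{i+1}\hookrightarrow T_i$, but this alone does not force $T_N=0$. One way to salvage the argument: writing $\cF_i$ for the algebraization of $\cE^{(m)}_i$, the rescaled sheaves $\varpi^{-i}\cF_i$ (viewed inside $\cF_0\otimes_V K$) form an \emph{ascending} chain which stabilizes precisely when some $T_N=0$, and this chain is bounded above by $j_*j^*\cF_0$ because $\cF_i|_{\fU}=\varpi^i\cF_0|_{\fU}$. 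So it stabilizes by Noetherianity --- but note that this fix already invokes the paper's reflexive-hull construction as the upper bound, making the one-step approach both simpler and logically prior.
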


\begin{proof}
Let $\fE^{(m)}$ be the coherent $\cO_{\fX}$-module 
corresponding to $\cE^{(m)}$ via GFGA. 
Also, let $\cD^{(m)}_{\fX,n}$ be the coherent 
$\cO_{\fX}$-module corresponding to 
$\cD^{(m)}_{\cX,n}$ via GFGA and put 
$\cD^{(m)}_{\fX} := \bigcup_n \cD^{(m)}_{\fX,n}$. 
(Then it forms a ring.) Then $\fE^{(m)}$ has a structure 
of $\cD^{(m)}_{\fX}$-module. \par 
Since $\bE$ is locally free and $\cE^{(m)}$ is $p$-torsion free, 
$\fE^{(m)}$ is locally free on an open 
subscheme $\fU$ of $\fX$ with ${\rm codim}(\fX \setminus \fU, \fX) \geq 2$. 
Let $j: \fU \hra \fX$ be the inclusion. 
Then, if we replace $\fE^{(m)}$ by $j_*j^*\fE^{(m)}$, 
it still has a structure of $\cD^{(m)}_{\fX}$-module and 
it is a reflexive $\cO_{\fX}$-module. So it 
satisfies Serre's condition $S_2$. Then 
$E^{(m)} = \fE^{(m)}/\varpi \fE^{(m)}$ satisfies 
Serre's condition $S_1$ and so it is torsion free. 
Then, the $p$-adic completion $\cE^{(m)}$ of $\fE^{(m)}$ 
satisfies the condition of the proposition, because 
the $p$-adic completion of $\cD^{(m)}_{\fX}$ is equal to 
$\wh{\cD}^{(m)}_{\cX}$. 
\end{proof}

Let the situation be as before Proposition \ref{prop:lattice} and 
take a $p$-torsion free 
$\cO_{\cX}$-coherent quasi-nilpotent 
$\wh{\cD}_{\cX}^{(m)}$-module 
$\cE^{(m)}$ with 
$\Q \otimes_{\Z} \cE^{(m)} = \cE$ as in 
Proposition \ref{prop:lattice2}. 
Also, let $\fE^{(m)}$, $E^{(m)}$ be as in (the proof of) 
Proposition \ref{prop:lattice2}. 

By Berthelot's theory Frobenius descent (\cite[2.3.7]{berthelotII}), 
there exist equivalences 
(called the level raising Frobenius pullback functor) 
\begin{equation}\label{eq:lrfp}
{\F^i}^*: 
\left( 
\begin{aligned}
& \text{$\cO_{\cX^{[i]}}$-coherent} \\ 
& \text{quasi-nilpotent $\wh{\cD}_{\cX^{[i]}}^{(m-i)}$-modules} 
\end{aligned} 
\right) 
\os{\cong}{\lra} 
\left( 
\begin{aligned}
& \text{$\cO_{\cX}$-coherent} \\ 
& \text{quasi-nilpotent $\wh{\cD}_{\cX}^{(m)}$-modules} 
\end{aligned} 
\right) 
\end{equation}
for $0 \leq i \leq m - \fe$ 
induced by $i$-times iteration $F^i$ of relative Frobenius. 
For precise definition of ${\F^i}^*$, 
see \cite[2.2.6(ii)]{berthelotII} or Section 2 in this paper. 
(We wrote the level raising Frobenius pullback functor
 by ${\F^i}^*$ to distinguish it from the pullback ${F^i}^*$ of 
$\cO_X$-modules by relative Frobenius. However, for 
$\cO_{\cX^{[i]}}$-coherent 
quasi-nilpotent $\wh{\cD}_{\cX}^{(m-i)}$-modules $\cF$ with 
$\varpi \cF = 0$, they are the same if we forget 
the $\wh{\cD}_{\cX}^{(m)}$-module structure on ${\F^i}^*\cF$.) 
For an $\cO_{\cX}$-coherent 
quasi-nilpotent $\wh{\cD}_{\cX}^{(m)}$-module $\cF$, 
we call the $\cO_{\cX^{[i]}}$-coherent 
quasi-nilpotent $\wh{\cD}_{\cX^{[i]}}^{(m-i)}$-module $\cF'$ 
satisfying ${\F^i}^*\cF' = \cF$ 
the $i$-th Frobenius antecedent of $\cE^{(m)}$. 

For $0 \leq i \leq m - \fe$, let $\cE^{(m)[i]}$, $E^{(m)[i]}$ be 
the $i$-th Frobenius antecedent of $\cE^{(m)}$, $E^{(m)}$, 
respectively. (Then $E^{(m)[i]} = \cE^{(m)[i]}/\varpi \cE^{(m)[i]}$.) 
Let us denote the coherent $\cO_{\fX^{[i]}}$-module 
corresponding to $\cE^{(m)[i]}$ via GFGA by $\fE^{(m)[i]}$. 
Then we have the following: 

\begin{prop}
The sheaf $\Q \otimes_{\Z} \cE^{(m)[i]}$ does not depend on  
the choice of $\cE^{(m)}$ as in Proposition \ref{prop:lattice}, and 
does not depend on $m \geq i + \fe$ either. $($Hence, by GFGA, 
the same is true for the restriction of $\fE^{(m)[i]}$ to $\bX^{[i]}.)$  
\end{prop}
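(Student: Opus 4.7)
The plan is to derive both independence statements from the rational version of Berthelot's Frobenius descent equivalence (\ref{eq:lrfp}). First, $\Q$-tensoring (\ref{eq:lrfp}) yields an equivalence of categories
\[
{\F^i}^{*}_{\Q}: \Bigl(\text{$\cO_{\cX^{[i]},\Q}$-coh.\ q.n.\ $\wh{\cD}^{(m-i)}_{\cX^{[i]},\Q}$-mod.}\Bigr) \os{\cong}{\lra} \Bigl(\text{$\cO_{\cX,\Q}$-coh.\ q.n.\ $\wh{\cD}^{(m)}_{\cX,\Q}$-mod.}\Bigr)
\]
with the property that ${\F^i}^{*}_{\Q}(\Q\otimes\cF)\cong \Q\otimes{\F^i}^{*}\cF$ for every integral $\cF$; this follows by formally inverting $p$ in the $\Z$-linear integral equivalence. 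In particular, $\Q\otimes_{\Z} \cE^{(m)[i]}$ is characterized up to canonical isomorphism as the Frobenius antecedent of $\cE=\Q\otimes_{\Z}\cE^{(m)}$ under ${\F^i}^{*}_{\Q}$.

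Independence of the choice of lattice is then immediate: two lattices $\cE^{(m)}_{1},\cE^{(m)}_{2}$ as in Proposition \ref{prop:lattice} satisfy $\Q\otimes\cE^{(m)}_{j}=\cE$ as quasi-nilpotent $\wh{\cD}^{(m)}_{\cX,\Q}$-modules, so their rational antecedents $\Q\otimes\cE^{(m)[i]}_{j}$ must coincide. For independence of $m\ge i+\fe$, the natural ring morphisms $\wh{\cD}^{(m)}_{\cX}\to\wh{\cD}^{(m+1)}_{\cX}$ (and similarly on $\cX^{[i]}$) yield restriction-of-scalars functors, and the construction of ${\F^i}^{*}$ in \cite[2.2.6(ii), 2.3.7]{berthelotII} is functorial in the level, producing a square
\[
\begin{CD}
\text{q.n.\ }\wh{\cD}^{(m+1-i)}_{\cX^{[i]}}\text{-mod} @>{{\F^i}^{*}_{(m+1)}}>> \text{q.n.\ }\wh{\cD}^{(m+1)}_{\cX}\text{-mod} \\
@VVV @VVV \\
\text{q.n.\ }\wh{\cD}^{(m-i)}_{\cX^{[i]}}\text{-mod} @>{{\F^i}^{*}_{(m)}}>> \text{q.n.\ }\wh{\cD}^{(m)}_{\cX}\text{-mod}
\end{CD}
\]
commutative up to natural isomorphism (vertical arrows being restriction of scalars). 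The $\wh{\cD}^{(m+1)}_{\cX,\Q}$-structure on $\cE$ restricts to its $\wh{\cD}^{(m)}_{\cX,\Q}$-structure, this being built into the definition of the $\cD^{\dagger}_{\cX,\Q}$-module structure on $\cE$; so the rationalized diagram forces $\Q\otimes\cE^{(m+1)[i]}$, viewed as $\wh{\cD}^{(m-i)}_{\cX^{[i]},\Q}$-module, to be isomorphic to $\Q\otimes\cE^{(m)[i]}$. In particular the underlying $\cO_{\cX^{[i]},\Q}$-modules coincide; iterating on $m$ gives the claim, and GFGA yields the parenthetical assertion about $\ol{\bX}^{[i]}$.

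The main obstacle is the commutativity of the displayed square, namely the compatibility of Berthelot's Frobenius descent with the level-raising ring maps on $\wh{\cD}^{(\bullet)}$. Intuitively this is clear because both ${\F^i}^{*}_{(m)}$ and ${\F^i}^{*}_{(m+1)}$ are defined as pullback along the same geometric iterate $F^{i}$ of the relative Frobenius, and differ only in the $\cD$-module data they carry; but extracting this rigorously from the functorial constructions of \cite[2.2.6(ii)]{berthelotII} is the technical crux. An equivalent and conceptually cleaner formulation avoids the diagram entirely: one observes that the system $\{\Q\otimes\cE^{(m)[i]}\}_{m\ge i+\fe}$ assembles into a single $\cO_{\cX^{[i]},\Q}$-coherent $\cD^{\dagger}_{\cX^{[i]},\Q}$-module, namely the convergent isocrystal on $X^{[i]}/K$ obtained as the $i$-th Frobenius antecedent of $\cE$, after which both independence statements become tautological.
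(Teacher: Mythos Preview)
Your proposal is correct and follows essentially the same route as the paper. The paper argues lattice-independence by noting that two lattices $\cE^{(m)},\cF^{(m)}$ admit maps $\alpha,\beta$ with $\alpha\beta=\beta\alpha=p^{N}$, pushes these through the integral equivalence ${\F^{i}}^{*}$, and inverts $p$; your use of the rationalized equivalence ${\F^{i}}^{*}_{\Q}$ is simply the packaged form of the same argument, and for $m$-independence both you and the paper invoke exactly the compatibility of ${\F^{i}}^{*}$ with restriction along $\wh{\cD}^{(m)}_{\cX}\to\wh{\cD}^{(m')}_{\cX}$. (One trivial slip: the parenthetical concerns $\bX^{[i]}$, not $\ol{\bX}^{[i]}$.)
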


\begin{proof}
If we choose another $\cE^{(m)}$ which we denote by 
$\cF^{(m)}$, we have morphisms 
$\alpha: \cE^{(m)} \lra \cF^{(m)}, \beta: \cF^{(m)} \lra \cE^{(m)}$ 
with $\beta \circ \alpha = p^N, \alpha \circ \beta = p^N$ for some 
$N \in \N$. Let us denote the $i$-th Frobenius antecedent 
of $\cF^{(m)}$ by $\cF^{(m)[i]}$.  
%%and the 
%%the coherent $\cO_{\fX}$-module corresponding to $\cF^{(m,i)}$ via GFGA by 
%%$\fF^{(m,i)}$. 
Then, since ${\F^i}^*$ is an equivalence, 
the maps $\alpha, \beta$ induce 
the maps $\alpha': \cE^{(m)[i]} \lra \cF^{(m)[i]}, 
\beta': \cF^{(m)[i]} \lra \cE^{(m)[i]}$ with 
$\beta' \circ \alpha' = p^N, \alpha' \circ \beta' = p^N$. 
So $\Q \otimes_{\Z} \cE^{(m)[i]} = \Q \otimes_{\Z} \cF^{(m)[i]}$. \par 
For $i+\fe \leq m \leq m'$, $\cE^{(m')}$ as in Proposition \ref{prop:lattice} 
can be regarded also as a $\wh{\cD}_{\cX}^{(m)}$-module 
via the restriction by the canonical 
map $\wh{\cD}_{\cX}^{(m)} \lra \wh{\cD}_{\cX}^{(m')}$, and 
the functor ${\F^i}^*$ is compatible with it. So we see that 
$\Q \otimes_{\Z} \cE^{(m)[i]}$ does not depend on $m$. 
\end{proof}

So, in the sequel, we denote the sheaf $\Q \otimes_{\Z} \cE^{(m)[i]}$ by 
$\cE^{[i]}$ and the restriction of 
$\fE^{(m)[i]}$ to $\bX^{[i]}$ by $\bE^{[i]}$. Note that these are 
defined for all $i \in \N$. Note also that 
$\cE^{[i]} = \Q \otimes \cE^{(m)[i]}$ has a structure of 
$\wh{\cD}^{(m)}_{\cX^{[i]},\Q}$-modules $(m \geq i + \fe)$ 
which are compatible 
with respect to $m$, which induces a structure of 
$\cD^{\dagger}_{\cX^{[i]},\Q}$-module. 

\begin{prop}\label{prop:hilb}
For any $i \in \N$, 
the reduced Hilbert polynomial $p_{\bE^{[i]}}$ of $\bE^{[i]}$ is 
equal to $p_{\cO_{X}}$. In particular, 
$\mu(\bE^{[i]}) = 0$. 
\end{prop}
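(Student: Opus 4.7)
The plan is to algebraize the $\cD^{\dagger}_{\cX^{[i]},\Q}$-module structure on $\cE^{[i]}$ into an integrable connection on the coherent sheaf $\bE^{[i]}$ over the characteristic-zero base $K$, and then to apply the classical vanishing of rational Chern classes for coherent sheaves with integrable connection, combined with Hirzebruch-Riemann-Roch, to read off the reduced Hilbert polynomial.

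First I would recover an algebraic integrable connection on $\bE^{[i]}$. By construction $\cE^{[i]} = \Q \otimes_{\Z} \cE^{(m)[i]}$ is an $\cO_{\cX^{[i]},\Q}$-coherent $\wh{\cD}^{(m)}_{\cX^{[i]},\Q}$-module for every $m \geq i + \fe$, compatibly in $m$; in particular, restriction along $\wh{\cD}^{(0)}_{\cX^{[i]},\Q} \to \wh{\cD}^{(m)}_{\cX^{[i]},\Q}$ endows $\cE^{[i]}$ with a quasi-nilpotent $\wh{\cD}^{(0)}_{\cX^{[i]},\Q}$-module structure. Via the equivalence with HPD-stratifications (the $m=0$ case of Definition \ref{mahah}) and then via GFGA applied to $\cX^{[i]}/\Spf V$, this translates into an algebraic integrable $K$-linear connection $\nabla:\bE^{[i]} \to \bE^{[i]} \otimes \Omega^{1}_{\bX^{[i]}/K}$; in particular, $\bE^{[i]}$ is locally free.

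Since $\bX^{[i]}$ is a smooth projective variety over the characteristic-zero field $K$ and $\bE^{[i]}$ carries an integrable connection, a standard Atiyah-class argument shows that all rational Chern classes $c_j(\bE^{[i]})$ vanish in $H^{2j}_{\dR}(\bX^{[i]}/K)$ for $j \geq 1$. Hence $\mathrm{ch}(\bE^{[i]}) = \rk \bE^{[i]}$ in rational cohomology, and Hirzebruch-Riemann-Roch gives
\[
\chi(\bE^{[i]}(n)) = \int_{\bX^{[i]}} \mathrm{ch}(\bE^{[i]}) \, e^{n\, c_1(\cO_{\bX^{[i]}}(1))} \, \mathrm{Td}(\bX^{[i]}) = \rk \bE^{[i]} \cdot \chi(\cO_{\bX^{[i]}}(n))
\]
for every $n \in \N$. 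Dividing by $\rk \bE^{[i]}$ yields $p_{\bE^{[i]}} = p_{\cO_{\bX^{[i]}}}$, which agrees with $p_{\cO_X}$ via flatness of $\fX^{[i]}/\Spec V$ and the fact that $\cO_{\bX^{[i]}}(1)$ is pulled back from $\cO_{\fX}(1)$ along $\pi^{i}$. The vanishing of $c_{1}(\bE^{[i]})$ in particular forces $\mu(\bE^{[i]}) = 0$.

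The main obstacle is the bookkeeping in the first step: tracking the $\cD^{\dagger}_{\cX^{[i]},\Q}$-module structure on $\cE^{[i]}$ (which is itself obtained via Berthelot's Frobenius descent from a $p$-torsion free lattice on the formal scheme $\cX$) through GFGA, in order to confirm that it genuinely descends to an honest algebraic integrable connection on $\bE^{[i]}$ over $\bX^{[i]}$. Once this compatibility is secured, the remainder is a standard piece of characteristic-zero algebraic geometry for locally free sheaves with flat connection.
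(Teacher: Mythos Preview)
Your argument is correct and coincides with the paper's first proof: the paper likewise observes that $\bE^{[i]}$ inherits a $\cD_{\bX^{[i]}}$-module structure (by restricting the $\cD^{(m)}_{\fX^{[i]}}$-action to the generic fiber, where all levels collapse to the usual $\cD$), then invokes the standard vanishing of Chern classes for bundles with integrable connection in characteristic zero and Riemann--Roch. The paper also supplies a second, independent proof in the spirit of Esnault--Mehta, comparing $\bE^{[i]}$ with its Frobenius antecedents in $\ell$-adic cohomology to force the relevant Chern numbers to be infinitely $p$-divisible and hence zero.
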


\begin{proof}
We give two proofs. First, if we define $\cD_{\fX^{[i]}}^{(m)}$ as in the 
proof of Proposition \ref{prop:lattice2} (wuth $\fX$ replaced by 
$\fX^{[i]}$), $\fE^{(m)[i]}$ has a 
structure of $\cD_{\fX^{[i]}}^{(m)}$-module. Also, one can see that 
the restriction of $\cD_{\fX^{[i]}}^{(m)}$ to $\bX^{[i]}$ is nothing but 
the usual $D$-module $\cD_{\bX^{[i]}}$ on $\bX^{[i]}$. So 
$\bE^{[i]}$ has a structure of $\cD_{\bX^{[i]}}$-module. 
Then it is well-known (e.g., \cite[2.3.1]{kobayashi}) that 
the Chern classes of non-zero degree of $\bE^{[i]}$ vanish, and so  
$p_{\bE^{[i]}}$ is equal to $p_{\cO_{\bX^{[i]}}} = p_{\cO_{X^{[i]}}} = 
p_{\cO_X}$ by Riemann-Roch. \par 
The second proof is analogous to the proof in 
\cite[Lemma 2.1]{esnaultmehta}. 
By Riemann-Roch, it suffices to prove the following claim: 
For any $0 \leq j \leq d-1$, for 
any class $\xi \in CH_j(\bX^{[i]})$ and 
for any homogeneous polynomial $\gamma_{d-j}$ of degree $d-j$ 
with rational coefficients in the Chern class, 
we have $\xi_{\et} \cdot \gamma_{d-j}(\bE^{[i]})_{\et} = 0$,
where $\xi_{\et}, \gamma_{d-j}(\bE^{[i]})_{\et}$ are the 
class of $\xi, \gamma_{d-j}(\bE^{[i]})$ considered in 
$l$-adic etale cohomology 
$H^{*}_{\et}(\ol{\bX}^{[i]},\Q_l) \cong 
H^{*}_{\et}(X^{[i]},\Q_l)$. 
(Here $\ol{\bX}^{[i]}$ is the geometric fiber of $\bX^{[i]}$.) 
If we take any $i \leq i' \leq m - \fe$, 
we have 
\begin{align*}
\xi_{\et} \cdot \gamma_{d-j}(\bE^{[i]})_{\et} & = 
\xi_{\et} \cdot \gamma_{d-j}(E^{(m)[i]})_{\et} = 
\xi_{\et} \cdot \gamma_{d-j}({F^{i'-i}}^*E^{(m)[i']})_{\et}  
\\ & = 
p^{i'-i} 
\xi_{\et} \cdot \gamma_{d-j}(E^{(m)[i']})_{\et} 
= 
p^{i'-i} 
\xi_{\et} \cdot \gamma_{d-j}(\bE^{[i']})_{\et}. 
\end{align*}
Since the above equality holds for any $i \leq i'$ and 
$\xi_{\et} \cdot \gamma_{d-j}(\bE^{[i']})_{\et}$ \,$(i \leq i')$ 
are rational numbers with bounded denominator 
(depending only on $\xi$ and $\gamma_{d-j}$), 
we see the equality 
$\xi_{\et} \cdot \gamma_{d-j}(\bE^{[i]})_{\et} = 0$. 
\end{proof}

We say that a convergent isocrystal $\cE$ on $X/K$ is trivial 
if it is isomorphic to a finite direct sum of 
the structure convergent isocrystal $\cO_{X/K}$ 
(which corresponds to the $\cD^{\dagger}_{\cX,\Q}$-module 
$\cO_{\cX,\Q}$ defined by the canonical action via derivation). 

To proceed further, we have to impose certain conditions of 
(semi)stability. We state our first result, which assumes a 
certain stability condition: 

\begin{thm}\label{thm:1}
Let $X$ be as above and assume that its etale fundamental group 
$\pi_1^{\et}(X)$ is trivial. Then, any 
convergent isocrystal $\cE$ on $X/K$ which 
satisfies the following condition {\rm (A)} 
is trivial\,$:$ \\ 
{\rm (A)}$:$ For infinitely many natural numbers $m$, there exists 
some $i = i(m) \in \N$ and 
some $p$-torsion free 
$\cO_{\cX}$-coherent quasi-nilpotent 
$\wh{\cD}_{\cX^{[i]}}^{(m)}$-module $\cG^{[i](m)}$ with 
$\Q \otimes_{\Z} \cG^{[i](m)} = \cE^{[i]}$ 
as $\wh{\cD}_{\cX^{[i]},\Q}^{(m)}$-modules such that 
$G^{[i](m)} := \cG^{[i](m)}/\varpi \cG^{[i](m)}$ is 
$\mu$-stable as $\cO_{X^{[i]}}$-module. 
\end{thm}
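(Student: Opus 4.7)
The plan is to follow the three-step strategy outlined in the Introduction: (i) trivialize a mod-$\varpi$ reduction of $\cE^{[i]}$; (ii) lift that trivialization modulo $\varpi^N$ for every $N$; (iii) descend to $\cE$.

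For step (i), condition (A) provides, for each of infinitely many $m$, a $\mu$-stable $\wh{\cD}^{(m)}_{\cX^{[i]}}$-lattice $\cG^{[i](m)}$ of $\cE^{[i]}$, whose reduction $G^{[i](m)}$ has slope $0$ and vanishing rational Chern classes by the argument used in the proof of Proposition \ref{prop:hilb}. For each $0 \leq j \leq m-\fe$, the Frobenius antecedent $\cG^{[i](m)[j]}$ on $\cX^{[i+j]}$ is a $\wh{\cD}^{(m-j)}$-module whose mod-$\varpi$ reduction $G^{[i](m)[j]}$ remains $\mu$-stable, since any destabilizer would pull back via $F^j$ to a destabilizer of $G^{[i](m)}$ (Frobenius scales slopes by $p^j$), and by construction $F^{*} G^{[i](m)[j+1]} \cong G^{[i](m)[j]}$. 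Since $m$ is unbounded in (A), Langer's boundedness for $\mu$-semistable sheaves with fixed Hilbert polynomial $p_{\cO_X}$ and trivial rational Chern classes allows me to extract, as in Esnault-Mehta, a genuine $F$-divided sequence on the $X^{[n]}$'s, i.e., a stratified bundle on some $X^{[n]}$. Because $X^{[n]} \cong X$ as schemes we have $\pi_1^{\et}(X^{[n]}) \cong \pi_1^{\et}(X) = 1$, so Esnault-Mehta's theorem forces this stratified bundle, and hence $G^{[i](m)}$, to be trivial.

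For step (ii), starting from a trivialization modulo $\varpi$, the obstruction to lifting $\cG^{[i](m)}/\varpi^N$ to $\cG^{[i](m)}/\varpi^{N+1}$ lives in a finitely generated $\Ext^1$ group in the category of $\wh{\cD}^{(m)}_{\cX^{[i]}}$-modules. The critical input is that the level-raising Frobenius pullback functor ${\F^\ell}^*$ of (\ref{eq:lrfp}) scales such obstruction classes by $p^\ell$---reflecting that Frobenius multiplies differentials by $p$, the same mechanism underlying Berthelot's Frobenius descent. Since (A) lets me take $m$, and with it the Frobenius index $\ell = i(m)$, arbitrarily large, the obstruction becomes arbitrarily $p$-divisible in a finitely generated $V$-module and hence vanishes. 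Lifting inductively through all $N$ and passing to the $\varpi$-adic limit produces a trivialization of a $\wh{\cD}^{(m)}$-lattice of $\cE^{[i]}$; inverting $p$ then gives $\cE^{[i]} \cong \cO_{\cX^{[i]},\Q}^{\oplus r}$.

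For step (iii), the equivalence ${\F^i}^*$ of (\ref{eq:lrfp}) preserves the trivial isocrystal, so $\cE \cong \cO_{\cX,\Q}^{\oplus r}$ follows from triviality of $\cE^{[i]}$. If step (ii) only delivers triviality generically on $\bX^{[i]}$, then Langton's theorem---or equivalently the properness of Langer's moduli of semistable sheaves on $\fX^{[i]}$---extends a generic trivialization across the special fiber. The main obstacle I anticipate is step (ii): one must pin down the precise $\Ext^1$ group controlling the obstruction, verify its finite generation over $V$, and carry out the computation establishing the $p^\ell$-scaling of obstruction classes under the level-raising Frobenius pullback. The finiteness argument in step (i) upgrading a family of compatible finite $\mu$-stable chains to an actual $F$-divided sheaf via Langer's moduli is also delicate.
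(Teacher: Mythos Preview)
Your three-step outline matches the paper's strategy, and step~(i) is close in spirit to Proposition~\ref{prop:a}. The serious gap is in step~(ii).

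The obstruction to lifting a trivialization $\cG^{[i](m)}/\varpi^N \cong (\cO/\varpi^N)^r$ to $\cG^{[i](m)}/\varpi^{N+1}$ is a class in $H^1(X^{[i]},\cO_{X^{[i]}})^{r^2}$, a $k$-vector space, not a $V$-module; there is no notion of $p$-divisibility there. More importantly, the level-raising Frobenius pullback does \emph{not} scale this class by $p$: Proposition~\ref{prop:c} shows that $e(\F^*\cE) = F^*e(\cE)$, where $F^*$ is the ordinary (semilinear) Frobenius on $H^1(X,\cO_X)$. This map is certainly not multiplication by $p$ (which is zero in characteristic $p$); in general it can be bijective. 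What makes it vanish after finitely many iterations is precisely the hypothesis $\pi_1^{\et}(X)=1$: this forces the semisimple part $H^1(X,\cO_X)_{\rm ss} = H^1_{\et}(X,\Z/p\Z)\otimes k$ to vanish, so $F^*$ is nilpotent on $H^1(X,\cO_X)$ (Proposition~\ref{prop:b}). Your intuition that ``Frobenius multiplies differentials by $p$'' is a characteristic-zero picture and does not apply here; without invoking nilpotency of Frobenius on $H^1(X,\cO_X)$ you cannot kill the obstruction. Relatedly, condition~(A) gives no control over $i(m)$; it could be $0$ for every $m$. What \emph{is} large is $m$ itself, hence the length $m-\fe$ of the chain of Frobenius antecedents $G^{[i](m)[j]}$, and it is along this chain (not along $i(m)$) that the paper repeatedly applies $\F^*$ to kill obstruction classes.

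A second issue: even granting triviality of $\cG^{[i(m)](m)}/\varpi^{m'}$ for $m'$ growing with $m$, the lattice itself changes with $m$, so you cannot pass to a $\varpi$-adic limit directly. The paper pulls all the $\cG^{[i(m)](m)}$ back to a common $\cX^{[i_0]}$ and uses Langton's theorem to show they differ only by powers of $\varpi$, whence a \emph{single} lattice is trivial modulo $\varpi^{m'}$ for all $m'$. Finally, once $\cE^{[i_0]}\cong\cO_{\cX^{[i_0]},\Q}$ as $\cO$-modules, one still needs $H^0(\bX^{[i_0]},\Omega^1_{\bX^{[i_0]}})=0$ (deduced from $H^1_{\et}(X,\Q_l)=0$) to conclude the $\cD^\dagger$-action is trivial.
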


\begin{cor}\label{cor:1}
Let $X$ be as above and assume that its etale fundamental group 
$\pi_1^{\et}(X)$ is trivial. Then any convergent isocrystal $\cE$ 
on $X/K$ of rank $1$ is trivial. 
\end{cor}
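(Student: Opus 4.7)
The plan is to reduce to Theorem \ref{thm:1} by verifying its condition (A), and the verification will be straightforward once one observes that every torsion-free coherent sheaf of rank one is automatically $\mu$-stable. In fact, the simplest choice $i(m) = 0$ will work uniformly in $m$, so the ``infinitely many $m$'' clause of (A) poses no difficulty whatsoever.

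Concretely, for each $m \in \N$ I would apply Proposition \ref{prop:lattice2} directly to the convergent isocrystal $\cE$ itself to obtain a $p$-torsion free $\cO_{\cX}$-coherent quasi-nilpotent $\wh{\cD}_{\cX}^{(m)}$-module $\cE^{(m)}$ with $\Q \otimes_{\Z} \cE^{(m)} = \cE$ and such that $E^{(m)} := \cE^{(m)}/\varpi\cE^{(m)}$ is a torsion-free $\cO_X$-module. Since $\cE$ is of rank $1$ and $\cE^{(m)}$ is a $p$-torsion free lattice in $\cE$, the sheaf $\cE^{(m)}$ has rank $1$, and hence so does $E^{(m)}$. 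A torsion-free coherent $\cO_X$-module of rank $1$ admits no coherent subsheaf whose rank lies strictly between $0$ and $1$, and is therefore vacuously $\mu$-stable. Setting $i(m) = 0$ and $\cG^{[i(m)](m)} := \cE^{(m)}$ thus verifies condition (A) for every $m$, and Theorem \ref{thm:1} gives the triviality of $\cE$. There is no substantive obstacle here; the entire content of the corollary is the observation that for rank-one coherent sheaves the $\mu$-stability hypothesis of Theorem \ref{thm:1} comes for free.
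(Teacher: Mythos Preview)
Your proof is correct and essentially identical to the paper's own argument: the paper likewise takes $i=0$, uses the lattice $\cE^{(m)}$ from Proposition~\ref{prop:lattice2}, and observes that any torsion-free $\cO_X$-module of generic rank $1$ is automatically $\mu$-stable, thereby verifying condition (A) of Theorem~\ref{thm:1}.
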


\begin{proof}
If the rank of $\cE$ is equal to $1$, 
$\cE^{(m)} \, (m \in \N)$ in Proposition \ref{prop:lattice2} 
satisfies the condition {\rm (A)} (for $\cG^{[i](m)}$ with $i=0$) 
because any torsion free $\cO_X$-module of generic rank $1$ is 
$\mu$-stable. 
\end{proof}

We have a similar result under 
certain assumption of semistability: 

\begin{thm}\label{thm:2}
Let $X$ be as above and assume that its etale fundamental group 
$\pi_1^{\et}(X)$ is trivial. Then any 
convergent isocrystal $\cE$ on $X/K$ 
which satisfies the following 
condition {\rm (B)} is trivial\,$:$ \\
{\rm (B)}$:$ For infinitely many natural numbers $m$, 
there exists some $i = i(m), l = l(m) 
\in \N$ with $l(m) \leq m - \fe$, 
$l(m) \to \infty \,\, (m \to \infty)$ 
and some $p$-torsion free 
$\cO_{\cX^{[i]}}$-coherent quasi-nilpotent 
$\wh{\cD}_{\cX^{[i]}}^{(m)}$-module $\cG^{[i](m)}$ with 
$\Q \otimes_{\Z} \cG^{[i](m)} = \cE^{[i]}$ 
as $\wh{\cD}_{\cX^{[i]},\Q}^{(m)}$-modules such that, for 
any $0 \leq j \leq l$, the $j$-th Frobenius antecedent 
$G^{[i](m)[j]}$ of 
$G^{[i](m)} := \cG^{[i](m)}/\varpi \cG^{[i](m)}$ is semistable as 
$\cO_X$-module. 
\end{thm}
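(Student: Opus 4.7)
The plan is to follow the three-step strategy outlined in the introduction, in parallel with the approach to Theorem \ref{thm:1}, the principal modification being that semistability yields only a Jordan--H\"older filtration rather than a single stable constituent, and this filtration has to be tracked through each step.

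First, I would establish triviality modulo $\varpi$. For $m$ satisfying (B) with parameters $i = i(m)$ and $l = l(m)$, the sheaf $G^{[i](m)[l]}$ on $X^{[i+l]}$ has all Chern classes of positive degree equal to zero (by Proposition \ref{prop:hilb}), and by hypothesis its first $l$ relative Frobenius pullbacks remain semistable with reduced Hilbert polynomial $p_{\cO_X}$. Since $l(m) \to \infty$ as $m \to \infty$, Langer's boundedness for semistable sheaves of fixed Hilbert polynomial with vanishing discriminant, together with a diagonal extraction in his moduli space, produces in the limit strongly semistable sheaves with trivial Chern classes whose Jordan--H\"older constituents are strongly $\mu$-stable of slope zero. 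The compatible $\wh{\cD}^{(m)}$-module structures across varying $m$ promote each such constituent to a stratified bundle, so the Esnault--Mehta theorem \cite{esnaultmehta} forces it to be trivial (using $\pi_1^{\et}(X)=1$). Consequently $G^{[i](m)}$ is an iterated extension of copies of $\cO_{X^{[i]}}$.

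Second, I would lift triviality from the mod $\varpi$ level to each mod $\varpi^N$ reduction. The obstruction to a flat deformation of the iterated extension found above lies in an $\Ext^1$-group in the category of $\wh{\cD}^{(m)}_{\cX^{[i]}}/\varpi^N$-modules. Since condition (B) supplies lattices at arbitrarily high level $m$ and the level-raising Frobenius pullback \eqref{eq:lrfp} is an equivalence, the obstruction is constrained by compatibility with iterated Frobenius descent in the sense of \cite{berthelotII}; exploiting the Cartier-type factor of $p$ that appears under relative Frobenius on $\Ext$-classes of coherent sheaves, this forces the obstruction to vanish. Thus each rationalized mod $\varpi^N$ reduction of $\cE^{[i]}$ is trivial.

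Finally, I would promote this to triviality of $\cE$ itself. By Langton's theorem \cite{langton}, or by using Langer's moduli of semistable sheaves on $\fX^{[i]}$, the generic fiber $\bE^{[i]}$ extends to a $\varpi$-flat semistable family on $\fX^{[i]}$ uniquely characterized by its compatible system of mod $\varpi^N$ reductions. By the previous step each reduction is trivial, hence so is the family; consequently $\cE^{[i]}$ is a trivial convergent isocrystal on $X^{[i]}/K$, and applying ${\F^i}^*$ yields triviality of $\cE$. The main obstacle I expect is the first step: extracting a genuine stratified bundle structure from semistability of Frobenius antecedents up to depth $l(m)\to\infty$, in particular verifying that the Jordan--H\"older decompositions of the various $G^{[i](m)[j]}$ are compatible across relative Frobenius to yield coherent stratified bundle structures on the constituents, so that Esnault--Mehta applies.
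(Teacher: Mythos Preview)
Your three-step outline matches the paper's strategy, but Step~1 contains a genuine gap, and you correctly flag it yourself at the end. The difficulty is that the Jordan--H\"older filtrations of $G^{[i](m)[j]}$ for varying $j$ need not be Frobenius-compatible a priori: pulling back a JH filtration of $G^{[i](m)[j+1]}$ by $F$ gives graded pieces that are only semistable, and refining to a JH filtration of $G^{[i](m)[j]}$ may strictly increase the length. Your proposed remedy---a ``diagonal extraction'' across different $m$ using boundedness---does not work: the lattices $\cG^{[i](m)}$ for different $m$ are unrelated (even $i(m)$ varies), so there is no meaningful limit object, and the JH constituents do not inherit $\wh{\cD}^{(m)}$-module structures that could be assembled into a stratified bundle.

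The paper resolves this by a pigeonhole argument carried out for a \emph{single} $m$ with $l = l(m)$ large. One builds the JH filtrations by descending induction on $j$, always refining the Frobenius pullback of the previous one; the lengths then satisfy $r \geq q_0 \geq q_1 \geq \cdots \geq q_l \geq 1$, so some value is constant on a stretch of length roughly $l/r$. On that stretch the graded pieces form genuine Frobenius-compatible sequences of \emph{stable} sheaves, long enough to invoke Proposition~\ref{prop:a} (a finite-length version of the Esnault--Mehta input, proved via Langer's moduli and the uncountability of $k$), forcing each piece to be $\cO$. Then Proposition~\ref{prop:b}---nilpotence of $F_{\rm abs}^*$ on $H^1(X,\cO_X)$, which holds because $\pi_1^{\et}(X)=1$ kills the semisimple part---collapses the iterated extensions, yielding $G^{[i](m)[j]} \cong \cO^r$ on a long sub-stretch.

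Your Step~2 also misidentifies the mechanism: the deformation class does not vanish by a ``Cartier-type factor of $p$''. Rather (Proposition~\ref{prop:c} and Corollary~\ref{cor:c}), for sufficiently high level one has $e(\F^*\cE) = F^*e(\cE)$ in $H^1(X,\cO_X)^{r^2}$, and then the \emph{same} nilpotence of Frobenius on $H^1$ used above kills the class after $b$ further applications of $\F^*$; iterating this climbs from $\varpi^N$ to $\varpi^{N+1}$. Step~3 is essentially correct and agrees with the paper's use of Langer's projective moduli of semistable sheaves on $\fX^{[i_0]}$.
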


We have the ${\sigma_{K}^n}^*$-linear 
pullback functor 
$$ 
{F_{{\rm abs}}^n}^*: 
\left( 
\begin{aligned}
& \text{convergent} \\ 
& \text{isocrystals on $X/K$}
\end{aligned}
\right) 
\os{\cong}{\lra} 
\left( 
\begin{aligned}
& \text{convergent} \\ 
& \text{isocrystals on $X/K$}
\end{aligned}
\right) 
$$ 
induced by $F_{{\rm abs}}^n$. 
In terms of arithmetic $D$-modules, 
it is written as the composite of 
the ${\sigma_K^n}^*$-linear pullback functor 
$$ 
{\pi^n}^*: \left( 
\begin{aligned}
& \text{$\cO_{\cX,\Q}$-coherent} \\ 
& \text{$\cD_{\cX}^{\dagger}$-modules} 
\end{aligned} 
\right) 
\os{\cong}{\lra} 
\left( 
\begin{aligned}
& \text{$\cO_{\cX^{[n]},\Q}$-coherent} \\ 
& \text{$\cD_{\cX^{[n]}}^{\dagger}$-modules} 
\end{aligned} 
\right) 
$$ 
induced by $\pi^n$ and the functor 
$$ {\F^n_{\Q}}^*: \left( 
\begin{aligned}
& \text{$\cO_{\cX^{[n]},\Q}$-coherent} \\ 
& \text{$\cD_{\cX^{[n]}}^{\dagger}$-modules} 
\end{aligned} 
\right) 
\os{\cong}{\lra} 
\left( 
\begin{aligned}
& \text{$\cO_{\cX,\Q}$-coherent} \\ 
& \text{$\cD_{\cX}^{\dagger}$-modules} 
\end{aligned} 
\right) 
$$ 
induced by the $\Q$-linearization of the 
level raising Frobenius pullback functors 
$$ 
{\F^n}^*: 
\left( 
\begin{aligned}
& \text{$\cO_{\cX^{[n]}}$-coherent} \\ 
& \text{quasi-nilpotent $\wh{\cD}_{\cX^{[n]}}^{(m)}$-modules} 
\end{aligned} 
\right) 
\os{\cong}{\lra} 
\left( 
\begin{aligned}
& \text{$\cO_{\cX}$-coherent} \\ 
& \text{quasi-nilpotent $\wh{\cD}_{\cX}^{(m+n)}$-modules} 
\end{aligned} 
\right)  
$$ 
for $m \gg 0$. Note that the functors 
${\pi^n}^*$, ${\F^n_{\Q}}^*$ can be written as a certain 
iteration of the functors ${\pi}^*$, ${\F_{\Q}}^*$ in the case
$n=1$, and they are `commutative' in suitable sense 
as long as they are defined. 

In this paper, a convergent $F$-isocrystal on $X/K$ is a pair 
$(\cE, \Phi)$ consisting of a convergene isocrystal 
$\cE$ on $X/K$ endowed with an isomorphism 
$\Phi: {F_{\rm abs}^n}^*\cE \os{\simeq}{\lra} \cE$ for some $n \in \N$. 

Then we have the following corollary of Theorem \ref{thm:2}: 

\begin{cor}\label{cor:2}
Let $X$ be as above and assume that its etale fundamental group 
$\pi_1^{\et}(X)$ is trivial. Then, for any 
convergent $F$-isocrystal $(\cE, \Phi)$ on $X/K$ 
which satisfies the following 
condition {\rm (C)}, $\cE$ is trivial as a convergent isocrystal\,$:$ \\
{\rm (C)}$:$ There exists some $i \in \N$ and 
some $p$-torsion free 
$\cO_{\cX^{[i]}}$-coherent quasi-nilpotent 
$\wh{\cD}_{\cX^{[i]}}^{(\fe)}$-module $\cG^{[i](\fe)}$ with 
$\Q \otimes_{\Z} \cG^{[i](\fe)} = \cE^{[i]}$ 
as $\wh{\cD}_{\cX^{[i]},\Q}^{(\fe)}$-modules such that 
$G^{[i](\fe)} := \cG^{[i](\fe)}/\varpi 
\cG^{[i](\fe)}$ is strongly semistable as 
$\cO_X$-module. 
\end{cor}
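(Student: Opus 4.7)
The plan is to reduce to Theorem \ref{thm:2} by verifying condition (B) for $\cE$, using the $F$-isocrystal structure to upgrade the low-level lattice furnished by (C) into lattices of arbitrarily high level whose mod-$\varpi$ Frobenius antecedents are all semistable. Write $\cG := \cG^{[i](\fe)}$ and $G := G^{[i](\fe)}$ for the level-$\fe$ lattice and its strongly semistable reduction, and let $s$ be the period of $\Phi$, so that the $N$-fold iterate yields an isomorphism $\Phi^N \colon F_{\rm abs}^{Ns*}\cE \xrightarrow{\sim} \cE$ for every $N \in \N$.

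For each $N$, I would form
\[
\cH_N \; := \; {\F^{i+Ns}}^*\bigl(\pi^{Ns*}\cG\bigr),
\]
a $p$-torsion free, $\cO_{\cX}$-coherent, quasi-nilpotent $\wh{\cD}^{(\fe+i+Ns)}_{\cX}$-module on $\cX$. Here $\pi^{Ns*}$ carries $\cG$ from $\cX^{[i]}$ to $\cX^{[i+Ns]}$ preserving level (as $\pi$ is a $\sigma_V$-base change), while the level-raising Frobenius pullback ${\F^{i+Ns}}^*$ (defined since $i+Ns = (\fe+i+Ns) - \fe$) raises the level by $i+Ns$ and lands on $\cX$. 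Using $\pi^{Ns}\circ F^{Ns} = F_{\rm abs}^{Ns}$ together with the commutativity of $F$ and $\pi$, one rewrites $\pi^{Ns}\circ F^{i+Ns} = F_{\rm abs}^{Ns}\circ F^i$, so that
\[
\Q\otimes_{\Z} \cH_N \; = \; F^{i*} F_{\rm abs}^{Ns*} \cE^{[i]} \; = \; F_{\rm abs}^{Ns*}\cE \; \cong \; \cE,
\]
the middle equality by $F^{i*}\cE^{[i]} = \cE$ and the last isomorphism being $\Phi^N$. Transporting along $\Phi^N$ identifies $\cH_N$ as a lattice of $\cE^{[0]} = \cE$ on $\cX$ at level $m := \fe + i + Ns$, realising the choice $i(m) = 0$ in condition (B).

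Finally, for any $0 \leq j \leq i+Ns = m - \fe$, factoring ${\F^{i+Ns}}^* = {\F^j}^* \circ {\F^{i+Ns-j}}^*$ shows that the $j$-th Frobenius antecedent of $\cH_N$ is ${\F^{i+Ns-j}}^*\pi^{Ns*}\cG$ on $\cX^{[j]}$, whose reduction mod $\varpi$ is $F^{(i+Ns-j)*}\pi^{Ns*}G$, the pullback of the strongly semistable sheaf $G$ by a composition of relative Frobenii and $\sigma$-twists, which is itself semistable. Hence $\cE$ satisfies (B) with $i(m) = 0$ and $l(m) = m - \fe$ on the infinite arithmetic progression $m = \fe + i + Ns$, $N \in \N$, and Theorem \ref{thm:2} gives the triviality of $\cE$. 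I expect the most delicate step to be the functoriality bookkeeping: verifying the compatibility $\pi^{Ns}\circ F^{i+Ns} = F_{\rm abs}^{Ns}\circ F^i$ and confirming that $\Phi^N$ indeed identifies $\cH_N$ with a lattice of $\cE$ at the claimed level $m$, rather than of a twisted variant.
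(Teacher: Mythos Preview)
Your proposal is correct and follows essentially the same route as the paper: verify condition (B) of Theorem~\ref{thm:2} by applying $\pi$-pullback and then level-raising Frobenius pullback to $\cG^{[i](\fe)}$, and use the $F$-isocrystal structure to identify the result with a lattice of the isocrystal at arbitrarily high level whose Frobenius antecedents reduce mod~$\varpi$ to relative-Frobenius pullbacks of the strongly semistable $G^{[i](\fe)}$. The only cosmetic difference is that the paper first transfers the $F$-structure from $\cE$ to $\cE^{[i]}$ and keeps all lattices on $\cX^{[i]}$ (taking $i(m)=i$ constant, with $\cG^{[i](m)}:={\F^{m-\fe}}^*{\pi^{m-\fe}}^*\cG^{[i](\fe)}$), whereas you descend all the way to $\cX$ with $i(m)=0$; either bookkeeping choice works.
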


\begin{proof}
Because we have ${\F^i_{\Q}}^*\cE^{[i]} = \cE, \, 
{\F^i_{\Q}}^*{F_{\rm abs}^n}^*\cE^{[i]} = 
{\F^i_{\Q}}^*{\F^n_{\Q}}^*{\pi^n}^*\cE^{[i]} = 
{\F^n_{\Q}}^*{\pi^n}^*{\F^i_{\Q}}^*\cE^{[i]} = {F_{\rm abs}^n}^*\cE$, 
$\cE^{[i]}$ admits a structure of convergent $F$-isocrystal by some 
isomorphism $\Psi: {F_{\rm abs}^n}^* \cE^{[i]} \os{\cong}{\lra} \cE^{[i]}$. 
Then, for $m \in n \N + \fe$, we put 
$\cG^{[i](m)} := {\F^{m-\fe}}^*{\pi^{m-\fe}}^*\cG^{[i](\fe)}$, 
$G^{[i](m)} := \cG^{[i](m)}/\varpi \cG^{[i](m)}$. 
Then we have $\Q \otimes_{\Z} \cG^{[i](m)} = 
{F_{\rm abs}^{m-\fe}}^*\cE^{[i]} \cong \cE^{[i]}$ 
as $\wh{\cD}_{\cX^{[i]},\Q}^{(m)}$-modules. 
Also, for $0 \leq j \leq m - \fe$, the $j$-th Frobenius antecedent 
of $G^{[i](m)}$ is equal to ${F^{m-\fe - j}}^*{\pi^{m-\fe }}^*G^{[i](\fe)} 
$, which is semistable by the strong semistability of 
$G^{[i](\fe)}$. So 
$\cE$ satisfies the condition {\rm (B)} in Theorem \ref{thm:2} and so 
it is trivial as a convergent isocrystal. 
\end{proof}

We restate the corollary in the case $V = W(k), i=0$ for the convenience 
to the reader: 

\begin{cor}\label{cor:22}
Let $X$ be a projective smooth variety over $k$ which is liftable 
to a projective smooth formal scheme $\cX$ over $\Spf W(k)$. 
Also, assume that the etale fundamental group 
$\pi_1^{\et}(X)$ is trivial. Then, for any 
convergent $F$-isocrystal $(\cE, \Phi)$ on $X/{\rm Frac}\,W(k)$ 
which satisfies the following 
condition {\rm (D)}, $\cE$ is trivial as a convergent isocrystal\,$:$ \\
{\rm (D)}$:$ There exists some $p$-torsion free 
$\cO_{\cX}$-coherent quasi-nilpotent 
$\wh{\cD}_{\cX}^{(0)}$-module $\cG$ with 
$\Q \otimes_{\Z} \cG = \cE$ 
as $\wh{\cD}_{\cX,\Q}^{(0)}$-modules such that 
$G := \cG/p\cG$ is strongly semistable as 
$\cO_X$-module. 
\end{cor}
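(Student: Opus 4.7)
The plan is to deduce Corollary \ref{cor:22} as an immediate specialization of Corollary \ref{cor:2} to the case $V = W(k)$ and $i = 0$, so the argument is essentially bookkeeping. First, I would unwind the data in the case $V = W(k)$: the absolute ramification index is $e = 1$, which forces $\fe = 0$, so the level appearing in condition (C) of Corollary \ref{cor:2} becomes $\fe = 0$, the ring $\wh{\cD}^{(\fe)}_{\cX^{[i]}}$ becomes $\wh{\cD}^{(0)}_{\cX^{[i]}}$, and $\varpi = p$.

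Next, I would specialize to $i = 0$: by construction, $\cX^{[0]} = \cX$ and the $0$-th Frobenius antecedent $\cE^{[0]}$ of $\cE$ equals $\cE$ itself. Thus, taking $\cG^{[0](0)} := \cG$ (the module furnished by condition (D)) identifies condition (C) of Corollary \ref{cor:2} with condition (D) of Corollary \ref{cor:22} term by term: $\cG$ is a $p$-torsion free $\cO_{\cX}$-coherent quasi-nilpotent $\wh{\cD}^{(0)}_{\cX}$-module with $\Q \otimes_{\Z} \cG = \cE$ as $\wh{\cD}^{(0)}_{\cX,\Q}$-modules, and its reduction $G = \cG/p\cG$ is strongly semistable on $X$. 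Applying Corollary \ref{cor:2} then yields the triviality of $\cE$ as a convergent isocrystal.

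Since the statement is purely a convenient reformulation under the assumptions $V = W(k)$, $i = 0$, there is no genuine obstacle; the only point one should briefly confirm is that the normalization conventions $\cX^{[0]} = \cX$, $\cE^{[0]} = \cE$, $\fe = 0$ really do hold in this setting, all of which are recorded explicitly at the beginning of Section 1 and in the discussion preceding Proposition \ref{prop:hilb}.
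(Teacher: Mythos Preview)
Your proposal is correct and matches the paper's own treatment: the paper does not give a separate proof of Corollary~\ref{cor:22} but simply introduces it as a restatement of Corollary~\ref{cor:2} in the special case $V = W(k)$, $i = 0$, and your verification that $\fe = 0$, $\varpi = p$, $\cX^{[0]} = \cX$, $\cE^{[0]} = \cE$ is exactly the bookkeeping needed to see that condition~(D) is condition~(C) specialized in this way.
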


\section{Proofs}

In this section, we give proofs of Theorems 
\ref{thm:1}, \ref{thm:2}. 
So, in this section, let $X$ be as in the previous section 
and assume moreover that $\pi_1^{\et}(X)$ is trivial. 

\begin{prop}
To prove theorems, we can enlarge $k$ so that $k$ is uncountable.
\end{prop}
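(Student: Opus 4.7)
The plan is to show that both the hypotheses and the conclusions of Theorems \ref{thm:1} and \ref{thm:2} (and the corollaries) are stable under a suitable faithfully flat extension of the base, so that one may freely enlarge $k$ to an uncountable algebraically closed field. First I would pick any uncountable algebraically closed extension $k \subseteq k'$ and construct a complete discrete valuation ring $V' \supseteq V$ of mixed characteristic with residue field $k'$, uniformizer $\varpi$ and the same absolute ramification index $e$ (hence the same $\fe$), together with a lift $\sigma_{V'}$ of absolute Frobenius on $k'$ extending $\sigma_V$. Concretely, writing $V = W(k)[T]/(f(T))$ for an Eisenstein polynomial $f$, one may take $V' = W(k')[T]/(f(T))$, which is again a complete DVR; the rule $a \otimes b \mapsto \sigma_{W(k')}(a) \otimes \sigma_V(b)$ then defines the desired $\sigma_{V'}$. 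Set $K' := \Frac V'$.

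Next I would form the base changes $X' := X \otimes_k k'$, $\fX' := \fX \otimes_V V'$, $\cX' := \cX \,\wh{\otimes}_V V'$, $\bX' := \bX \otimes_K K'$, and the corresponding base change $\cE'$ of $\cE$, together with its auxiliary data (the lattices $\cE^{(m)}$, the Frobenius antecedents $\cE^{(m)[i]}$, the sheaves $\cE^{[i]}, \bE^{[i]}$, and if relevant the Frobenius structure $\Phi$). The hypotheses of the theorems transfer to this new setting: the etale fundamental group $\pi_1^{\et}(X')$ remains trivial because the etale fundamental group of a proper variety over an algebraically closed field is insensitive to further algebraically closed base change; the constructions recalled in Section 1 (rings of differential operators, $m$-HPD-stratifications, Frobenius antecedents) commute with the base change $V \to V'$; and the $\mu$-(semi)stability (and strong semistability) conditions appearing in (A), (B), (C), (D) are preserved under extension of the ground field to an algebraically closed overfield by standard results in \cite{hl}.

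It would then remain to descend the conclusion. If $\cE'$ is trivial as a convergent isocrystal on $X'/K'$, then $\bE' \cong \cO_{\bX'}^n$, and by flat base change $\dim_K H^0(\bX, \bE) = \dim_{K'} H^0(\bX', \bE') = n$, so the evaluation map $H^0(\bX, \bE) \otimes_K \cO_{\bX} \lra \bE$ becomes an isomorphism after $\otimes_K K'$ and hence is itself an isomorphism, compatibly with the $\cD^{\dagger}_{\cX,\Q}$-module structures, giving the triviality of $\cE$. The step I expect to be the main obstacle is the first: verifying that all of the auxiliary $p$-adic structures (lattices, HPD-stratifications, Frobenius antecedents, and the Frobenius lift itself in the ramified case) really do behave functorially under $V \to V'$. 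Once these compatibilities are in place, the remaining verifications are routine.
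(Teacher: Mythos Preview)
Your proposal is correct and follows the same route as the paper: base-change along $V \to V' := V \otimes_{W(k)} W(k')$ for an uncountable algebraically closed $k' \supseteq k$, then descend triviality by a dimension count on degree-zero cohomology. The paper's proof is terser --- it leaves the transfer of hypotheses implicit and, for the descent, works with rigid cohomology $H^0_{\rig}$ together with Ogus's comparison $H^0_{\rig}(X/K,\cE)\cong H^0_{\dR}(\bX/K,\bE)$, which makes horizontality of the generating sections automatic. Your variant via coherent $H^0(\bX,\bE)$ is fine, but the assertion that the resulting evaluation map is compatible with the $\cD^{\dagger}_{\cX,\Q}$-structure needs one more line: since $\cE'$ is trivial as an isocrystal, every global section of $\bE'$ is horizontal, and $H^0_{\dR}$ also commutes with the flat base change $K \to K'$, so in fact $H^0(\bX,\bE)=H^0_{\dR}(\bX/K,\bE)$; alternatively, just use $H^0_{\dR}$ (or $H^0_{\rig}$) in place of $H^0$ from the start.
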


\begin{proof}
Let $k'$ be an uncountable algebraically closed field containing $k$ and 
let $K'$ be the fraction field of $V \otimes_{W(k)} W(k')$. 
Put $X' := X  \otimes_k k'$ and denote the pull-back of $\cE$ to 
$X'$ by $\cE'$, which is a convergent isocrystal on $X'/K'$. 
Also, put $\bX' := \bX \otimes_{K} K'$, let 
$\bE'$ be the pullback of $\bE$ to $\bX'$ and let 
$\cD_{\bX}, \cD_{\bX'}$ be usual $D$-modules of $\bX, \bX'$. 
Then, as we have seen in the proof of Proposition 
\ref{prop:hilb}, $\bE$ admits naturally a structure of 
$\cD_{\bX}$-module and $\bE'$ admits a structure of 
$\cD_{\bX'}$-module, which is the pullback of the $\cD_{\bX}$-module 
structure of $\bE$. \par 
If we assume that the theorem is true for $X'$ and $\cE'$, we have 
$\dim_{K'} H^0_{\rig}(X'/K',\cE') \allowbreak = r$. Because 
$$H^0_{\rig}(X'/K',\cE') = H^0_{\dR}(\bX'/K', \bE')
= H^0_{\dR}(\bX/K, \bE) \otimes_K K'
= H^0_{\rig}(X/K,\cE) \otimes_K K'$$ 
by the comparison theorem of Ogus (\cite{ogus1}, \cite{ogus2}), we have 
the equality $\dim_{K} H^0_{\rig}(X/K, \allowbreak 
\cE) = r$. So $\cE$ is also trivial. 
\end{proof}

So, in the sequel, we assume that $k$ is uncountable. 

The following proposition, which uses Gieseker conjecture 
(proven by Esnault-Mehta \cite{esnaultmehta}) in $\mu$-stable case, 
is the first step for the proof: 

\begin{prop}\label{prop:a} 
Let $r$ be a positive integer. Then there exists a positive integer 
$a = a(r)$ satisfying the following condition$:$ 
For any sequence of stable sheaves $\{E_i\}_{i=0}^a$ of length $a$ 
with $E_a$ on $X^{[a+j]}$ for some $j \geq 0$, ${\rm rank}\,E_0 \leq r$, 
$p_{E_i} = p_{\cO_X} \,(0 \leq i \leq a)$ and 
$F^*E_{i+1} = E_i \,(0 \leq i \leq a-1)$, 
$E_0$ is isomorphic to $\cO_{X^{[j]}}$. 
\end{prop}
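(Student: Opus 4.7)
The plan is to argue by contradiction, combining the Esnault--Mehta theorem \cite{esnaultmehta} on Gieseker's conjecture with Langer's construction \cite{al1}, \cite{al2} of the moduli of stable sheaves. By the preceding proposition I may assume $k$ is uncountable, and since each $X^{[i]}$ is abstractly isomorphic to $X$ (compatibly with the polarization $\cO_{X^{[i]}}(1) = \pi^{i*}\cO_X(1)$), it suffices to handle the case $j = 0$ and work with sheaves on $X$ throughout.

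For each rank $1 \leq r' \leq r$, let $M_{r'}$ denote Langer's quasi-projective moduli scheme of stable sheaves on $X$ with rank $r'$ and Hilbert polynomial $r' p_{\cO_X}$; this is of finite type over $k$. For each $n \geq 0$, define $W_n^{(r')} \subseteq M_{r'}(k)$ to be the set of iso-classes $[E_0]$ admitting a tower of stable antecedents $E_1, \ldots, E_n$ with $F^*E_{i+1} = E_i$ and $p_{E_i} = p_{\cO_X}$. Viewing such towers as points of a locally closed subscheme of $\prod_{i=0}^n M_{r'}$ cut out by the Frobenius-descent relations and projecting to the first factor, one sees that $W_n^{(r')}$ is constructible in $M_{r'}$; the chain of Zariski closures $\overline{W_0^{(r')}} \supseteq \overline{W_1^{(r')}} \supseteq \cdots$ stabilizes by the Noetherian property of $M_{r'}$.

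The central step is to show $\bigcap_n W_n^{(r')} \subseteq \{[\cO_X]\}$. Assume for contradiction that some $[E_0] \neq [\cO_X]$ lies in every $W_n^{(r')}$. Using the uncountability of $k$ together with a Mittag--Leffler-type argument on the finite-type schemes parametrizing length-$n$ towers with fixed starting point $[E_0]$, plus Noetherian induction on the irreducible components of the successive images, one extracts a genuine infinite compatible tower $(E_0, E_1, E_2, \ldots)$ of stable sheaves with $F^*E_{i+1} = E_i$. This is precisely a stratified bundle of rank $r' \leq r$ on $X$, so since $\pi_1^{\et}(X) = 1$, the Esnault--Mehta theorem forces it to be trivial; combined with stability, this gives $r' = 1$ and $E_0 \cong \cO_X$, contradicting the assumption. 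Hence $\bigcap_n W_n^{(r')} \subseteq \{[\cO_X]\}$, and a parallel use of uncountability and Noetherian stabilization of $\overline{W_n^{(r')}}$ upgrades this to $W_n^{(r')} \subseteq \{[\cO_X]\}$ for all $n$ larger than some bound $a_{r'}$; taking $a(r) := \max_{1 \leq r' \leq r} a_{r'}$ yields the proposition.

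The main obstacle will be the extraction of the genuine infinite compatible tower from the constructible (but not closed) data: since a decreasing chain of constructible subsets in a Noetherian scheme need not stabilize on the nose, one must combine the Noetherian stabilization of the closures with the uncountability of $k$ to rule out ``moving'' non-$\cO$ points in the successive $W_n^{(r')}$ whose trajectories have no common limit. Once this is in place, the application of Esnault--Mehta to the resulting stratified bundle and the conclusion via stability are straightforward.
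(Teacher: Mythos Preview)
Your proposal is correct and follows essentially the same route as the paper: both use Langer's moduli of stable sheaves, constructibility of the relevant loci (your $W_n^{(r')}$ is exactly the paper's $\im \V^n$, the image of the $n$-fold Verschiebung), the uncountability of $k$ combined with a Baire-type argument on the decreasing chain of constructible sets, and then the $\mu$-stable case of Esnault--Mehta to rule out nontrivial points in the intersection. The only difference is organizational---the paper first drives $\dim \im \V^n$ down to zero and then uses that a decreasing chain of finite sets stabilizes, whereas you first pin down $\bigcap_n W_n^{(r')}$ and then upgrade---and the tower-extraction step you flag as ``the main obstacle'' is handled (equally tersely) in the paper by the same uncountability/Noetherian mechanism you describe.
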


\begin{proof} 
For $1 \leq s \leq r$ and $n \geq 0$, let $M_s^{[n+j]}$ be the moduli 
of stable sheaves on $X^{[n+j]}$ with rank $s$ and reduced 
Hilbert polynomial 
$p_{\cO_X}$, which is constructed by Adrian Langer (\cite{al1}, \cite{al2}). 
It is 
a quasi-projective scheme over $k$. Also, let $M_{s,\circ}^{[n+j]}$ be 
the open subscheme 
consisting of stable sheaves $G$ such that ${F^n}^*G$ remains 
stable. (This is known to be an open condition. See discussion in 
the beginning of \cite[\S 3]{esnaultmehta}.) 
The pull-back by $F$ induces the morphisms called Verschiebungs 
$$ \cdots \lra M_{s,\circ}^{[2+j]} \os{\V}{\lra} M_{s,\circ}^{[1+j]} \os{\V}{\lra} M^{[j]}_s. $$
Let $\im \V^n$ be the image of $\V^n: M_{s,\circ}^{[n+j]} \lra M_s^{[j]}$, 
which is 
a constructible set of $M_s^{[j]}$. Then, $\dim \im \V^n$ 
is stable for $n \gg 0$, which we denote by $f$. 
Assume $f > 0$. Then the generic point of 
some irreducible closed subscheme of dimension $f$ remains contained in 
$\im \V^n \, (n \in \N)$. 
Pick such an irreducible closed subscheme and denote it by $C$. 
Then $C \cap \im \V^n$ is non-empty for any $n \in \N$ and it contains 
an open subscheme of $C$. So there exists a closed 
subscheme $D_n \subsetneq C$ of smaller dimension 
such that $C \cap \im \V^n \supseteq C \setminus D_n$. Then 
$C \cap (\bigcap_n \im \V^n) \supseteq C \setminus (\bigcup_n D_n)$. 
So it contains at least two $k$-rational points $P,P'$, 
because $k$ is uncountable. 
Since $P,P'$ are $k$-rational points of $\bigcap_n \im \V^n$, 
they induce two non-isomorphic stratified sheaves on $X^{[j]}$. 
This contradicts (the $\mu$-stable case of) 
the Gieseker conjecture proven by 
Esnault-Mehta. So $\im \V^n$ consists of 
finite set of points (possibly empty) for some $n$. 
Then, since $\bigcap_n \im \V^n$ is empty (if $s \geq 2$) or 
one point corresponding to $\cO_{X^{[j]}}$ 
(if $s=1$) by (the $\mu$-stable case of) Gieseker conjecture, 
it is equal to $\im \V^{a(s)}$ for some $a(s) \in \N$. 
Let us define $a$ to be the maximum of $a(s) \, (s \leq r)$. 
Then, if we are given a 
sequence $\{E_i\}_{i=0}^a$ 
as in the statement of the proposition 
with $s := {\rm rank} E_0 \leq r$, 
$E_0$ defines a $k$-rational point of $\im \V^{a(s)} \subseteq M^{[j]}_s$. 
Then $s$ should be equal to $1$ and 
$E_0$ should be isomorphic to 
$\cO_{X^{[j]}}$. 
\end{proof}

\begin{rem}\label{error?}
There is a mistake in 
\cite[Proposition 2.3]{esnaultmehta}, but it is fixed in 
\cite{esnaultmehta2}. 
We also point out that this mistake occurs in the discussion of 
reducing Gieseker's conjecture to that in $\mu$-stable case. 
Because we used Gieseker's conjecture only in $\mu$-stable case, 
we do not need the correction given in \cite{esnaultmehta2}. 
\end{rem}

For the proof of Theorem \ref{thm:2}, 
we need the following proposition, which proves the triviality of 
locally free sheaves of higher rank in certain situation. 

\begin{prop}\label{prop:b}
There exists a positive integer 
$b$ satisfying the following condition$:$ 
For any sequence of locally free sheaves $\{E_i\}_{i=0}^{b(r-1)}$ of length 
$b(r-1)$ 
on $X$ with $E_{b(r-1)}$ on $X^{[b(r-1)+j]}$ for some $j \geq 0$, 
${\rm rank}\,E_0 = r$, 
$F^*E_{i+1} = E_i \,(0 \leq i \leq b(r-1)-1)$ such that 
$E_{b(r-1)}$ is an iterated extension of $\cO_{X^{[b(r-1)+j]}}$, 
$E_0$ is isomorphic to $\cO_{X^{[j]}}^r$. 
\end{prop}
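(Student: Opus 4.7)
The plan is to prove this by induction on $r$ using a single constant $b$ that depends only on $X$. The base case $r=1$ is vacuous: the sequence has length zero and $E_0$ is an iterated extension of $\cO_{X^{[j]}}$ of rank one, so $E_0=\cO_{X^{[j]}}$. The heart of the argument is an analysis of how the relative Frobenius pullback acts on $V:=H^1(X,\cO_X)$.

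Under the scheme-level identifications $X^{[i]}\cong X$ the projection $\pi$ becomes the identity and $F:X^{[i]}\to X^{[i+1]}$ becomes the absolute Frobenius, so $F^*$ induces a single $\sigma$-linear endomorphism $\Phi$ of the finite-dimensional $k$-vector space $V$. The Artin--Schreier sequence, together with the surjectivity of $F_{\rm abs}-1$ on $H^0(X,\cO_X)=k$, identifies $\ker(\Phi-1)$ with $H^1_{\et}(X,\Z/p)=\Hom(\pi_1^{\et}(X),\Z/p)$, which vanishes by assumption. The Dieudonn\'e-type decomposition $V=V_{\rm bij}\oplus V_{\rm nil}$ into the bijective and nilpotent parts for $\Phi$, combined with the Lang-type fact that over algebraically closed $k$ the bijective part admits a $k$-basis of $\Phi$-fixed vectors, then forces $V_{\rm bij}=0$. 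Hence $\Phi$ is nilpotent, and I fix $b\geq 1$ with $\Phi^b=0$ on $V$.

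For the inductive step from $r-1$ to $r$, I fix a filtration $0=G^0\subset G^1\subset\cdots\subset G^r=E_{b(r-1)}$ by locally free subsheaves with each $G^s/G^{s-1}\cong\cO$. Since the relative Frobenius is flat, pulling this filtration back step by step along the sequence yields filtrations $G^{\bullet}_i$ on each $E_i$ with $G^s_i=F^*G^s_{i+1}$ and the same $\cO$-graded pieces. I apply the inductive hypothesis to the length-$b(r-2)$ subsequence $\{G^{r-1}_{i+b}\}_{i=0}^{b(r-2)}$ of rank-$(r-1)$ sheaves, with $j$ replaced by $j+b$; this yields $G^{r-1}_b\cong\cO^{\,r-1}$ on $X^{[b+j]}$, and consequently $G^{r-1}_i\cong\cO^{\,r-1}$ for all $0\leq i\leq b$ as well. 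Then $E_b$ sits in an extension $0\to\cO^{\,r-1}\to E_b\to\cO\to 0$ classified by $c\in\Ext^1(\cO,\cO^{\,r-1})=V^{\oplus(r-1)}$, and pulling back by Frobenius $b$ further times sends $c$ to $\Phi^b(c)=0$ componentwise. Hence $E_0\cong\cO_{X^{[j]}}^{\,r}$, completing the induction.

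The main obstacle I anticipate is justifying cleanly that the various relative Frobenius pullbacks on cohomology are indeed implemented by the single $\sigma$-linear operator $\Phi$ under the identifications $X^{[i]}\cong X$, together with the accompanying Dieudonn\'e-plus-Lang statement producing the $\Phi$-fixed $k$-basis of $V_{\rm bij}$. Neither point is deep, but together they are what convert the geometric hypothesis $\pi_1^{\et}(X)=1$ into the uniform numerical input $\Phi^b=0$ that drives the whole induction.
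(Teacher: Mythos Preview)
Your argument is correct and follows essentially the same approach as the paper: both reduce to the nilpotence of Frobenius on $H^1(X,\cO_X)$ (via $\pi_1^{\et}(X)=1$ and the Dieudonn\'e/Artin--Schreier decomposition, which the paper cites from Mumford), pull back the given filtration along the Frobenius chain, and then kill one extension class per block of $b$ steps. The only cosmetic difference is that the paper organizes this as an induction on the filtration index $l$ (showing $E_{b(r-l),l}\cong\cO^l$), whereas you phrase it as an induction on $r$ via the rank-$(r-1)$ subsheaf; these are the same $r-1$ applications of $\Phi^b=0$.
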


\begin{proof}
The proof is similar to that in \cite[Proposition 2.4]{esnaultmehta}. 
%%%% We write the argument for the convenience of the reader. 
By \cite[Corollary in p.143]{mumford}, we have the decomposition 
$H^1(X^{[n]},\cO_{X^{[n]}}) = H^1(X^{[n]},\cO_{X^{[n]}})_{\rm nilp} \oplus 
H^1(X^{[n]},\cO_{X^{[n]}})_{\rm ss}$ of $H^1(X^{[n]},\cO_{X^{[n]}})$ 
into the part 
$H^1(X^{[n]},\cO_{X^{[n]}})_{\rm nilp}$ 
where the absolute Frobenius $F_{\rm abs}^*$ 
acts nilpotently and the part 
$H^1(X^{[n]},\cO_{X^{[n]}})_{\rm ss}$ 
where the absolute Frobenius $F_{\rm abs}^*$ 
acts bijectively. Also, we have 
$$H^1(X^{[n]},\cO_{X^{[n]}})_{\rm ss} = H^1_{\et}(X^{[n]},\Z/p\Z) = 
\Hom(\pi_1^{\et}(X^{[n]}),\Z/p\Z) = 0$$ 
and there exists some $b \in \N$ 
such that $(F_{\rm abs}^*)^b$ acts by $0$ on 
$H^1(X^{[n]},\cO_{X^{[n]}})_{\rm nilp}$, 
since $H^1(X^{[n]},\cO_{X^{[n]}})_{\rm nilp}$ is finite-dimensional. 
%%(We can choose $b$ with 
%%$b \geq 2$. This makes some argument easier to describe 
%%in the proof later.) 
So $(F_{\rm abs}^*)^b$ acts by $0$ on $H^1(X^{[n]},\cO_{X^{[n]}})$. 
(Also, we can take $b$ independently of $n \in \N$ because $X^{[n]}$ is 
isomorphic to $X$ via $\pi^n$.) 
We prove the proposition for this choice of $b$. 

By assumption on $E_{b(r-1)}$, there exists a filtration 
$$ 0 = E_{b(r-1),0} \subset E_{b(r-1),1} \subset  
\cdots \subset E_{b(r-1),r} = E_{b(r-1)} $$
whose graded quotients are isomorphic to $\cO_{X^{[b(r-1)+j]}}$. 
By pulling it back to $E_{i}$ via ${F^*}^{b(r-1)-i}$, we obtain the 
filtration 
$$ 0 = E_{i,0} \subset E_{i,1} \subset  \cdots \subset E_{i,r} = E_{i} $$
of $E_i$ whose graded quotients are isomorphic to $\cO_{X^{[b(r-1)-i+j]}}$. 
We prove that $E_{b(r-l),l}$ is isomorphic to 
$\cO_{X^{[b(r-l)+j]}}^l$ by induction. 
Assume that $E_{b(r-l),l} \isom \cO_{X^{[b(r-l)+j]}}^l$. 
Then, for $b(r-l-1) \leq n \leq b(r-l)$, 
consider the extension class $e_{n}$ of the exact sequence 
$$ 0 \lra E_{n,l} \lra E_{n,l+1} \lra \cO_{X^{[n+j]}} \lra 0 $$
in $H^1(X^{[n+j]},E_{n,l}) = H^1(X^{[n+j]},\cO_{X^{[n+j]}})^l$. 
The family of classes $\{e_n\}_n$ defines an element of 
the inverse limit of the diagram 
$$ H^1(X^{[b(r-l)+j]},\cO_{X^{[b(r-l)+j]}})^l \os{F^*}{\lra} 
%%% H^1(X^{[b(r-l)-1+j]},\cO_{X^{[b(r-l)-1+j]}})^l \os{F^*}{\lra} 
\cdots 
\os{F^*}{\lra} H^1(X^{[b(r-l-1)+j]},\cO_{X^{[b(r-l-1)+j]}})^l $$
of length $b$ whose last component is $e_{b(r-l-1)}$. 
By twisting by the absolute Frobenius on $k$ 
(which is bijective), we obtain an element of 
the inverse limit of the diagram 
$$ H^1(X^{[b(r-l-1)+j]},\cO_{X^{[b(r-l-1)+j]}})^l \os{F_{\rm abs}^*}{\lra} 
%%H^1(X^{[b(r-l-1)+j]},\cO_{X^{[b(r-l-1)+j]}})^l
%%\os{F_{\rm abs}^*}{\lra} 
\cdots 
\os{F_{\rm abs}^*}{\lra} H^1(X^{[b(r-l-1)+j]},\cO_{X^{[b(r-l-1)+j]}})^l $$
of length $b$ whose last component is $e_{b(r-l-1)}$. 
Then, by definition of $b$, $e_{b(r-l-1)} = 0$. 
So $E_{b(r-l-1),l+1}$ is isomorphic to $\cO_X^{l+1}$ and we are done. 
\end{proof}

So far, we treated the triviality of sheaves $\cF$ with $\varpi \cF = 0$. 
To lift this triviality to the triviality of certain sheaves $\cF$ 
with $\varpi^N \cF = 0 \, (N \geq 2)$, we need to review the 
definition of the level raising Frobenius pullback functor 
\begin{equation}\label{eq:f}
{\F^s}^*: 
\left( 
\begin{aligned}
& \text{$\cO_{\cX^{[s]}}$-coherent} \\ 
& \text{quasi-nilpotent $\wh{\cD}_{\cX^{[s]}}^{(m)}$-modules} 
\end{aligned} 
\right) 
\os{\cong}{\lra} 
\left( 
\begin{aligned}
& \text{$\cO_{\cX}$-coherent} \\ 
& \text{quasi-nilpotent $\wh{\cD}_{\cX}^{(m+s)}$-modules} 
\end{aligned} 
\right) 
\end{equation}
for $m \geq \fe$. 

\begin{rem}
Precisely speaking, we need 
the level raising Frobenius pullback functor of the form 
\begin{equation}\label{eq:ffff}
{\F^s}^*: 
\left( 
\begin{aligned}
& \text{$\cO_{\cX^{[s+t]}}$-coherent} \\ 
& \text{quasi-nilpotent $\wh{\cD}_{\cX^{[s+t]}}^{(m)}$-modules} 
\end{aligned} 
\right) 
\os{\cong}{\lra} 
\left( 
\begin{aligned}
& \text{$\cO_{\cX^{[t]}}$-coherent} \\ 
& \text{quasi-nilpotent $\wh{\cD}_{\cX^{[t]}}^{(m+s)}$-modules} 
\end{aligned} 
\right) 
\end{equation}
for $m \geq \fe$. However, to lighten the notation, 
we describe the definition in detail only 
in the case of the functor \eqref{eq:f} (the case $t=0$). 
\end{rem}

First we give the definition in local situation. So 
let us forget the projectivity of $X, \cX$ for a while. 
Assume that there exists a local coordinate 
$t_1, ..., t_d$ of $\cX$ over $\Spf V$. 
(Then it induces a local coordinate of 
$\cX^{[s]}$ over $\Spf V$, which we also denote by 
$t_1, ..., t_d$.) Also, take a lift 
$\wt{F}^s: \cX \lra \cX^{[s]}$ of 
$s$-times iteration of the relative Frobenius morphism 
compatible with the morphism $\Spf V \lra \Spf V$ 
induced by $\sigma_V^s$ which satisfies the equalities 
$(\wt{F}^s)^*(t_i) = t_i^{p^s}$ for $1 \leq i \leq d$. 
%%(The last condition of Frobenius lift can be relaxed 
%%if we would like to 
%%just define the 
%%level raising Frobenius pullback. However, the choice of 
%%the Frobenius lift of this form will be important 
%%in the calculation below.) 
For $m,n \in \N$ with $m \geq \fe$, 
let $\cD(1)_{(m)}, \cD(1)_{(m)}^n$ 
be as in the previous section. 
Also, let $\cD(1)_{(m)}^{[s]}, \cD(1)_{(m)}^{[s],n}$
be the pull-back of 
$\cD(1)_{(m)}, \cD(1)_{(m)}^n$ by $\pi^n$, respectively. 
Then the homomorphism 
\begin{equation}\label{eq:1}
(\wt{F}^s \times \wt{F}^s)^*: 
\cO_{\cX^{[s]} \times \cX^{[s]}} \lra \cO_{\cX \times \cX} 
\end{equation}
naturally induces the homomorphisms 
\begin{equation}\label{eq:2}
\cO_{\cD(1)_{(m)}^{[s],n}} \lra 
\cO_{\cD(1)_{(m+s)}^n} \qquad (n \in \N). 
\end{equation}
(Here we use the assumption $m \geq \fe$. See 
\cite[2.2.2]{berthelotII}.) 
If we take modulo $p^N$, take the dual, 
take the union with respect to $n$ and take the inverse limit 
with respect to $N \in \N$, 
we obtain the homomorphism 
\begin{equation}\label{eq:3}
\wh{\cD}^{(m+s)}_{\cX} \lra (\wt{F}^s)^*\wh{\cD}^{(m)}_{\cX^{[s]}}. 
\end{equation}
Then, for a $\cO_{\cX^{[s]}}$-coherent 
quasi-nilpotent $\wh{\cD}_{\cX^{[s]}}^{(m)}$-module $\cE$, 
$(\wt{F}^s)^*\cE$ admits an action of 
$\wh{\cD}^{(m+s)}_{\cX}$ via the map \eqref{eq:3} and it 
becomes an $\cO_{\cX}$-coherent 
quasi-nilpotent $\wh{\cD}_{\cX}^{(m+s)}$-module. 
This is the definition of the level raising 
Frobenius pullback functor \eqref{eq:f} in local situation. 
If we put $\tau_i := 1 \otimes t_i - 
t_i \otimes 1$, the homomorphism 
\eqref{eq:1} is written as 
\begin{align}
\tau_i & \mapsto 
1 \otimes t_i^{p^s} - t^{p^s} \otimes 1 \label{eq:4} \\ 
& = (\tau_i + t_i \otimes 1)^{p^s} - t_i^{p^s} \otimes 1 
= \tau_i^{p^s} + \sum_{j=1}^{p^s-1} 
\begin{pmatrix} p^s \\ j \end{pmatrix} 
t_i^{p^s-j} \tau_i^j. \nonumber 
\end{align}
%%and the homomorphisms \eqref{eq:2}, \eqref{eq:3} are 
%%characterized by this equation and the compatibility with 
%%PD structures. 

We calculate a part of $\wh{\cD}_{\cX}^{(m+s)}$-action on 
$(\wt{F}^s)^*\cE$ when $m=\fe$ and $\cE$ is torsion. 

\begin{lem}\label{lem:fd1}
Let $N \geq 1$. 
Let the notations be as above with $m=\fe$ 
$($so $\cE$ is an $\cO_{\cX}$-coherent 
quasi-nilpotent $\wh{\cD}_{\cX^{[s]}}^{(\fe)}$-module$)$ and assume that 
$\varpi^N\cE = 0$. Also, let $\{\pa_{\langle l \rangle}\}_{l \in \N^d}$ be 
the family of elements in $\wh{\cD}^{(\fe+s)}_{\cX}$ such that 
the image of 
$\{\pa_{\langle l \rangle}\}_{l \in \N^d, |l| \leq n}$  
in $\cD^{(\fe+s)}_{\cX/p^M}$ is contained in $\cD^{(\fe+s)}_{\cX/p^M,n}$ 
and that it is the dual basis of 
$\{\tau^{\{l\}}\}_{l \in \N^d, |l| \leq n} \subseteq 
\cO_{\cD(1)_{(\fe+s)}^n}/p^M\cO_{\cD(1)_{(\fe+s)}^n}$ for all $M \in \N$. 
$($Here $\cD^{(\fe+s)}_{\cX/p^M}$, $\cD^{(\fe+s)}_{\cX/p^M,n}$ are 
as in the previous section.$)$ Then, for any $j \in \N^d$ with 
$0 < |j| < p^{s-\lfloor \frac{N}{e} \rfloor}$, $\pa_{\langle j \rangle}((\wt{F}^s)^*(x)) 
= 0$ for any $x \in \cE$. 
\end{lem}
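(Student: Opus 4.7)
The plan is to compute the action of $\pa_{\langle j\rangle}$ on $(\wt F^s)^*x$ directly from the pullback of the level-$\fe$ HPD stratification on $\cE$, and then to deduce the claimed vanishing from a $p$-adic valuation estimate on the resulting coefficients.

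First I would translate the statement into the language of stratifications. Writing the level-$\fe$ HPD stratification on $\cE$ as $\epsilon(1\otimes y)=\sum_k \pa'_{\langle k\rangle}(y)\,\tau^{\{k\}}_{(\fe)}$, where $\pa'_{\langle k\rangle}$ denotes the dual basis of $\{\tau^{\{k\}}_{(\fe)}\}$ on $\cX^{[s]}$, the stratification on $(\wt F^s)^*\cE$ that corresponds to its level-$(\fe+s)$ $\wh\cD$-module structure is the pullback $(\wt F^s\times\wt F^s)^*\epsilon$. Matching the two expansions against the basis $\{\tau^{\{j\}}_{(\fe+s)}\}$ of $\cO_{\cD(1)_{(\fe+s)}}$ yields the formula
\[
\pa_{\langle j\rangle}\bigl((\wt F^s)^*x\bigr)=\sum_k c_{k,j}\cdot(\wt F^s)^*\bigl(\pa'_{\langle k\rangle}(x)\bigr),
\]
where $c_{k,j}\in\cO_{\cX}$ is the coefficient of $\tau^{\{j\}}_{(\fe+s)}$ in $(\wt F^s\times\wt F^s)^*\bigl(\tau^{\{k\}}_{(\fe)}\bigr)$. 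Since $\varpi^N\cE=0$ and hence $\varpi^N$ annihilates $(\wt F^s)^*\bigl(\pa'_{\langle k\rangle}(x)\bigr)$, it suffices to prove $c_{k,j}\in\varpi^N\cO_{\cX}$ for every $k\in\N^d$ and every $j$ with $0<|j|<p^{s-\lfloor N/e\rfloor}$.

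Next I would compute $c_{k,j}$ explicitly. By formula \eqref{eq:4}, $(\wt F^s\times\wt F^s)^*\tau_i=\phi(\tau_i):=\sum_{a=1}^{p^s}\binom{p^s}{a}t_i^{p^s-a}\tau_i^a$, so $(\wt F^s\times\wt F^s)^*(\tau^{\{k\}}_{(\fe)})=\prod_i\phi(\tau_i)^{k_i}/q_{k_i}^{(\fe)}!$. For $|j|<p^{s-\lfloor N/e\rfloor}\leq p^{\fe+s}$ one has $\tau^{\{j\}}_{(\fe+s)}=\tau^j$, so $c_{k,j}$ is the coefficient of $\tau^j=\prod_i\tau_i^{j_i}$ in this polynomial product; it decomposes as a product over $i$ of
\[
\frac{1}{q_{k_i}^{(\fe)}!}\,t_i^{k_ip^s-j_i}\sum_{\vec a^{(i)}}\prod_l\binom{p^s}{a_l^{(i)}},
\]
the sum being over ordered compositions $\vec a^{(i)}=(a_1^{(i)},\ldots,a_{k_i}^{(i)})$ of $j_i$ with each $a_l^{(i)}\ge 1$. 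In particular, non-vanishing forces $k_i\le j_i$.

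Finally I would estimate $v_p(c_{k,j})$ using the identity $v_p\bigl(\binom{p^s}{a}\bigr)=s-v_p(a)$ for $1\le a\le p^s$: each summand above has $p$-adic valuation $sk_i-\sum_l v_p(a_l^{(i)})$. The crucial input is that $a_l^{(i)}\le j_i<p^{s-\lfloor N/e\rfloor}$ forces $v_p(a_l^{(i)})\le s-\lfloor N/e\rfloor-1$, so each term has valuation $\ge k_i(\lfloor N/e\rfloor+1)$. Combining this with Legendre's bound $v_p(q_{k_i}^{(\fe)}!)\le k_i/\bigl(p^{\fe}(p-1)\bigr)$ and the a priori integrality of $c_{k,j}$ (as an element of $\cO_{\cX}$ extracted from the PD envelope), one obtains $v_p(c_{k,j})\ge\lceil N/e\rceil$, and therefore $v_{\varpi}(c_{k,j})\ge e\lceil N/e\rceil\ge N$, as required.

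The main obstacle will be this last combinatorial valuation estimate, specifically showing that the gain $k_i(\lfloor N/e\rfloor+1)$ from the binomial valuations dominates the loss from the level-$\fe$ denominator $q_{k_i}^{(\fe)}!$ in a uniform way over $k$. The key simplification is that the constraint $k_i\le j_i<p^{s-\lfloor N/e\rfloor}$ forced by non-vanishing keeps $q_{k_i}^{(\fe)}$ small relative to the gain; it may also help to use the factorization $\binom{p^s}{a}=(p^s/a)\binom{p^s-1}{a-1}$ with $\binom{p^s-1}{a-1}$ a $p$-adic unit to make the cancellation between the numerator $p^{sk_i}$ and the denominator $q_{k_i}^{(\fe)}!$ transparent.
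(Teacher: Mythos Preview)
Your proposal is correct and follows essentially the same route as the paper: reduce to showing that the coefficient of $\tau^{j}$ in the image of $\tau^{\{k\}}_{(\fe)}$ under the pullback map is divisible by $\varpi^{N}$, then estimate that coefficient via the binomial valuations and the Legendre bound on $v_p(q_k!)$. The only differences are cosmetic---the paper first reduces to a single coordinate (using $\tau^{\{k\}}=\prod_i\tau_i^{\{k_i\}}$) and uses the digit-sum formula $v_p\!\binom{p^s}{j}=(\alpha(j)+\alpha(p^s-j)-1)/(p-1)$, whereas you work coordinatewise and use the equivalent identity $v_p\!\binom{p^s}{a}=s-v_p(a)$; both lead to the same bound $v_{\varpi}\ge e(\lfloor N/e\rfloor+1)\ge N$ per factor and the same endgame with $q_k!$.
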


\begin{proof}
It suffices to prove that, for any $l \in \N^d$, 
the coefficient of $\tau^j$ in 
the image of $\tau^{\{l\}}$ 
(where ${}^{\{l\}}$ is considered with respect to 
$\fe$-PD structure) by the map \eqref{eq:2} 
is zero modulo $\varpi^N$. 
It suffices to prove it for 
$l$ of the form $(0,...,l_i,...0) \, (l_i > 0)$ for some $i$. 
In this case, the coefficient of $\tau^j$ is zero unless 
$j$ is of the form $(0,...,j_i,...0)$. So 
we can put $l := l_i$, $j := j_i$ 
and regard them as elements in $\N$. 

We estimate the additive $\varpi$-adic valuation $v$ of 
the binomial coefficient 
$\begin{pmatrix} p^s \\ j \end{pmatrix}$, 
assuming $0 < j < p^{s-\lfloor \frac{N}{e} \rfloor}$. 
If we denote by $\alpha(n) \, (n \in \N)$ the sum of 
digits of the $p$-adic expansion of $n$, $v$ is given by 
$v = \frac{e(\alpha(j)+\alpha(p^s-j)-1)}{p-1}$. 
Under the condition $0 < j < p^{s-\lfloor \frac{N}{e} \rfloor}$, 
we have 
$$ p^s > p^s - j > 
 p^{s-\lfloor \frac{N}{e} \rfloor}(p^{\lfloor \frac{N}{e} \rfloor - 1} + 
\cdots + 1)(p-1) $$
and so $\alpha(p^s-j) = (p-1)\lfloor \frac{N}{e} \rfloor + 
\alpha(p^{s-\lfloor \frac{N}{e} \rfloor} - j)$. 
So 
$$ v = e\left( \left\lfloor \frac{N}{e} \right\rfloor + \frac{\alpha(j)+\alpha(p^{s-\lfloor \frac{N}{e} \rfloor} - j)-1}{p-1} 
\right). $$ 
On the right hand side, the second term inside the bracket is equal to or greater than $1$ because it is the $p$-adic additive valuation of 
the binomial coefficient $\begin{pmatrix} p^{s-\lfloor \frac{N}{e} \rfloor} \\ j \end{pmatrix}$. So we have the estimate 
$v \geq e(\left\lfloor \frac{N}{e} \right\rfloor +1) \geq N.$ 
From this and the calculation \eqref{eq:4}, we see that 
the coefficient of $\tau_i^j$ in 
the image of $\tau_i$ by the map \eqref{eq:2} 
is zero modulo $\varpi^N$, which we denote by $\varpi^Nc_j$. 
Then the coefficient of $\tau_i^j$ in 
the image of $\tau_i^{\{l\}}$ by the map \eqref{eq:2} 
is equal to 
$$ 
\frac{\varpi^{Nl}}{q_l!} 
\sum_{\substack{j_1 + \cdots + j_l = j \\ j_i > 0}} c_{j_1} \cdots c_{j_l}, 
$$
where $q_l := \lfloor \frac{l}{p^{\fe}} \rfloor$. 
We should prove that it is zero modulo $\varpi^N$. So it suffices to 
prove that the additive $\varpi$-adic valuation $w$ of 
$\frac{\varpi^{Nl}}{q_l!}$ is equal to or greater than $N$, 
which follows from the calculation 
\begin{align*}
w & = 
Nl - \frac{e(q_l - \alpha(q_l))}{p-1} 
> Nl - \frac{el}{p^{\fe}(p-1)}
\geq Nl - l \geq N-1. 
\end{align*}
So we are done. 
\end{proof}

Next we explain the definition of the level raising 
Frobenius pullback functor \eqref{eq:f} in global situation. 
So let $X, \cX$ be projective again. 
Let us take an open covering $\cX = \bigcup_{\alpha} \cX_{\alpha}$ 
of $\cX$ such that each $\cX_{\alpha}$ admits 
a local coordinate and a lift $\wt{F}_{\alpha}^{s}$ 
of $s$-times iteration of relative Frobenius 
as in the local situation. Then 
the definition in local situation says that, 
for an $\cO_{\cX^{[s]}}$-coherent 
quasi-nilpotent $\wh{\cD}_{\cX^{[s]}}^{(m)}$-module $\cE$, 
$(\wt{F}_{\alpha}^s)^*\cE$ has a structure of 
$\cO_{\cX_{\alpha}}$-coherent 
quasi-nilpotent $\wh{\cD}_{\cX_{\alpha}}^{(m+s)}$-module. 
So it suffices to glue this local definition. 
Let us put $\cX_{\alpha\beta} := 
\cX_{\alpha} \cap \cX_{\beta}$. Let 
$\cD(1)_{(m),\alpha\beta}$ be the open formal subscheme of 
$\cD(1)_{(m)}$ which is homeomorphic to $\cX_{\alpha\beta}$ and 
let $p_i: \cD(1)_{(m), \alpha\beta} \lra \cX_{\alpha\beta} \, 
(i = 1,2)$ be projections. 
Also, let ${}^{[s]}$ denote the pullback of formal schemes by 
$\pi^s$. Then it is known that the morphism 
$$\wt{F}_{\beta}^s|_{\cX_{\alpha\beta}} \times 
\wt{F}_{\alpha}^s|_{\cX_{\alpha\beta}}: \cX_{\alpha\beta} \lra 
\cX^{[s]}_{\alpha\beta} \times_V \cX^{[s]}_{\alpha\beta}$$
factors through $\cD(1)_{(m),\alpha\beta}^{[s]}$. 
(Here we use the assumption $m \geq \fe$.) 
The structure of $\wh{\cD}_{\cX^{[s]}}^{(m)}$-module on $\cE$ induces that of 
an HPD-stratification, which induces an isomorphism 
$\epsilon: p_2^*\cE \lra p_1^*\cE$ on $\cD(1)_{(m),\alpha\beta}^{[s]}$. 
By pulling it back to $\cX_{\alpha\beta}$ via 
the morphism induced by 
$\wt{F}_{\beta}^s|_{\cX_{\alpha\beta}} \times 
\wt{F}_{\alpha}^s|_{\cX_{\alpha\beta}}$, 
we obtain the isomorphism 
$\wt{F}_{\alpha}^*\cE \lra \wt{F}_{\beta}^*\cE$, and we can check 
that this gives the glueing data. So we obtain 
the level raising Frobenius pullback ${\F^s}^*\cE$. 
(In fact, it is known that the isomorphism above is an isomorphism 
of $\wh{\cD}^{(m+s)}_{\cX_{\alpha\beta}}$-modules and so 
${\F^s}^*\cE$ has a structure of $\wh{\cD}^{(m+s)}_{\cX}$-module.) 

Let $\cE$ be a $V/\varpi^{N+1} V$-flat, 
coherent $\cO_{\cX^{[s]}}/\varpi^{N+1}\cO_{\cX^{[s]}}$-module 
such that $\cE/\varpi^N\cE = 
(\cO_{\cX^{[s]}}/\varpi^N\cO_{\cX^{[s]}})^r$. Then $\cE$ is a 
locally free $\cO_{\cX^{[s]}}/\varpi^{N+1}\cO_{\cX^{[s]}}$-module of rank $r$. 
Take a sufficiently fine open covering $\cX^{[s]} = 
\bigcup_{\alpha} \cX^{[s]}_{\alpha}$ 
of $\cX$ and fix an isomorphism 
$\cE|_{\cX_{\alpha}^{[s]}} \cong (\cO_{\cX_{\alpha}^{[s]}}/
\varpi^{N+1}\cO_{\cX^{[s]}_{\alpha}})^r$ 
which lifts the fixed equality $\cE/\varpi^N\cE = 
(\cO_{\cX^{[s]}}/\varpi^N\cO_{\cX^{[s]}})^r$. 
Then, on each $\cX_{\alpha\beta}^{[s]}$, we have an isomorphism 
$$ 
(\cO_{\cX_{\alpha\beta}^{[s]}}/\varpi^{N+1}\cO_{\cX^{[s]}_{\alpha\beta}})^r 
\cong 
(\cE|_{\cX_{\alpha}^{[s]}})|_{\cX_{\alpha\beta}^{[s]}} 
= (\cE|_{\cX_{\beta}^{[s]}})|_{\cX_{\alpha\beta}^{[s]}} 
 \cong (\cO_{\cX_{\alpha\beta}^{[s]}}/
\varpi^{N+1}\cO_{\cX_{\alpha\beta}^{[s]}})^r $$
which lifts the identity on 
$(\cO_{\cX^{[s]}_{\alpha\beta}}/\varpi^{N}\cO_{\cX_{\alpha\beta}^{[s]}})^r$. 
So this map has the form 
$1 + \varpi^N\Lambda_{\alpha\beta}$, where 
$\Lambda_{\alpha\beta} \in \Gamma(X_{\alpha\beta}^{[s]}, M_n(\cO_{X^{[s]}}))$. 
(Here $X^{[s]}_{\alpha\beta} := \cX^{[s]}_{\alpha\beta} \otimes_V k$.) 
Then $e(\cE) := \{\Lambda_{\alpha\beta}\}$ defines an element of 
$H^1(X^{[s]}, M_n(\cO_{\cX^{[s]}})) = H^1(X^{[s]},\cO_{X^{[s]}})^{r^2}$, and 
it is trivial if and only if $\cE$ is isomorphic to 
 $(\cO_{\cX^{[s]}}/\varpi^{N+1}\cO_{\cX^{[s]}})^r$. 
The next proposition calculates $e(\F^*\cE)$ when 
$\cE$ has a structure of quasi-nilpotent $\wh{\cD}_{\cX^{[s]}}^{(m)}$-module 
for sufficiently large $m$: 

\begin{prop}\label{prop:c}
Let $N \geq 1$. 
Let $\cE$ a $V/\varpi^{N+1}V$-flat, coherent $\cO_{\cX^{[s]}}/\varpi^{N+1}
\cO_{\cX^{[s]}}$-module 
endowed with 
a structure of quasi-nilpotent $\wh{\cD}_{\cX^{[s]}}^{(m)}$-module 
such that $\cE/\varpi^N\cE = (\cO_{\cX^{[s]}}/\varpi^N \allowbreak \cO_{\cX^{[s]}})^r$ as 
$\cO_{\cX^{[s]}}$-modules, and 
let $\F^*\cE$ be its image by the level raising Frobenius pullback. 
Then, if $m \geq 2N+ \fe + 1$, $e(\F^*\cE) = F^*e(\cE)$, where 
$F^*$ on the right hand side is the map 
$H^1(X^{[s]},\cO_{X^{[s]}})^{r^2} \lra H^1(X^{[s-1]},\cO_{X^{[s-1]}})^{r^2}$
 induced by relative Frobenius. 
\end{prop}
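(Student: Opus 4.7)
The plan is to work on a fine enough affine cover $\cX^{[s]} = \bigcup_{\alpha} \cX^{[s]}_{\alpha}$ supporting local coordinates $t_1,\dots,t_d$ and Frobenius lifts $\wt{F}_{\alpha} \colon \cX^{[s-1]}_{\alpha} \to \cX^{[s]}_{\alpha}$. First, fix local trivializations $\phi_{\alpha} \colon (\cO_{\cX^{[s]}_{\alpha}}/\varpi^{N+1}\cO_{\cX^{[s]}_{\alpha}})^r \to \cE|_{\cX^{[s]}_{\alpha}}$ lifting the given identification modulo $\varpi^N$. The cocycle $\phi_{\beta}^{-1}\phi_{\alpha} = 1 + \varpi^N\Lambda_{\alpha\beta}$ then represents $e(\cE)$. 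I would take $\psi_{\alpha} := \wt{F}_{\alpha}^*\phi_{\alpha}$ as the natural trivializations of $\F^*\cE|_{\cX^{[s-1]}_{\alpha}}$, so that $e(\F^*\cE)$ is represented by the corresponding transition cocycle on overlaps.

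Next, I analyze this transition. By the construction of the level-raising Frobenius pullback reviewed in this section, the glueing isomorphism $\wt{F}_{\alpha}^*\cE \to \wt{F}_{\beta}^*\cE$ on $\cX^{[s-1]}_{\alpha\beta}$ is the pullback of the HPD-stratification $\epsilon$ via the map $\wt{F}_{\beta} \times \wt{F}_{\alpha} \colon \cX^{[s-1]}_{\alpha\beta} \to \cD(1)_{(m),\alpha\beta}^{[s]}$. Composing, the cocycle for $\F^*\cE$ equals $(\wt{F}_{\beta} \times \wt{F}_{\alpha})^*\xi_{\alpha\beta}$, where $\xi_{\alpha\beta} := (p_1^*\phi_{\beta})^{-1} \circ \epsilon \circ p_2^*\phi_{\alpha}$ is a matrix section on $\cD(1)_{(m),\alpha\beta}^{[s]}$ modulo $\varpi^{N+1}$. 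Expanding $\xi_{\alpha\beta} = \sum_k V_k \tau^{\{k\}}$ in the $\tau^{\{k\}}$-basis, the constant term satisfies $V_0 = \phi_{\beta}^{-1}\phi_{\alpha} = 1+\varpi^N\Lambda_{\alpha\beta}$ by the identity $\Delta^*\epsilon = \id$, while each $V_k$ with $k \neq 0$ encodes the action of $\pa^{\langle k \rangle}$ on $\cE$ via the trivializations.

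Pulling back, the $k=0$ term produces $\wt{F}_{\beta}^*(1+\varpi^N\Lambda_{\alpha\beta}) \equiv 1 + \varpi^N F^*\Lambda_{\alpha\beta}$ modulo $\varpi^{N+1}$, which is precisely the cocycle representing $F^*e(\cE)$. It remains to show that the correction $\sum_{k>0}\wt{F}_{\beta}^*(V_k) \cdot (\wt{F}_{\beta} \times \wt{F}_{\alpha})^*\tau^{\{k\}}$ defines a \v{C}ech $1$-coboundary modulo $\varpi^{N+1}$. The key input is that $\wt{F}_{\alpha}^*t_i - \wt{F}_{\beta}^*t_i \in p\,\cO_{\cX^{[s-1]}_{\alpha\beta}}$ since both lifts reduce to the same relative Frobenius modulo $p$, so $(\wt{F}_{\beta} \times \wt{F}_{\alpha})^*\tau_i$ is divisible by $p$, hence by $\varpi^e$. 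Combined with the description of $\tau^{\{k\}}$ in terms of the level-$m$ PD structure (involving division by $q_k!$ with $q_k = \lfloor k/p^m\rfloor$) and the inequality $e \leq p^{\fe}(p-1)$, a valuation estimate in the spirit of Lemma \ref{lem:fd1}, carried out for the bi-lift $\wt{F}_{\beta} \times \wt{F}_{\alpha}$ rather than a single lift, will yield the required vanishing or coboundary conclusion under the hypothesis $m \geq 2N + \fe + 1$.

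The main obstacle lies in the small-$k$ regime: for $k=1$ the \emph{a priori} bound only gives $v_{\varpi}\bigl((\wt{F}_{\beta} \times \wt{F}_{\alpha})^*\tau_i\bigr) \geq e$, which need not exceed $N$. I expect to overcome this either by exhibiting an explicit local modification $\psi_{\alpha} \mapsto \psi_{\alpha} \cdot (1 + \varpi^N \mu_{\alpha})$ of the trivializations that cancels the low-order part of the correction, or by reinterpreting the situation through a Frobenius antecedent of $\cE$ at level $m-s$ and invoking Lemma \ref{lem:fd1} to annihilate the offending operators $\pa^{\langle k \rangle}$ on $\wt{F}_{\alpha}^*$-sections. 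The precise exponent $m \geq 2N + \fe + 1$, rather than merely $m \geq \fe$, is designed to enter at this point through the level-$m$ PD divisibilities of $\tau^{\{k\}}$ needed to absorb two factors of $N$ (one from $\varpi^N\cE$ being nontrivial, one from wanting vanishing modulo $\varpi^{N+1}$).
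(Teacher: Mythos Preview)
Your setup, the formula for the glueing cocycle, and your identification of the small-$k$ obstacle are all correct, and your option (b) is exactly the paper's resolution; option (a) is not needed and likely would not work on its own. What is missing is the commitment to (b) together with the realization that the two mechanisms you mention must be used \emph{in tandem}, one for small $|l|$ and one for large $|l|$.

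Here is how the argument closes. Since $\cE$ is a $\varpi^{N+1}$-torsion quasi-nilpotent $\wh{\cD}_{\cX^{[s]}}^{(m)}$-module, Frobenius descent writes it as $\F^{m-\fe\,*}$ of a level-$\fe$ module on $\cX^{[s+m-\fe]}$ (so the descent is by $m-\fe$ steps to level $\fe$, not to level $m-s$). Lemma~\ref{lem:fd1} then furnishes a local basis $B$ of $\cE|_{\cX^{[s]}_{\alpha\beta}}$ on which $\pa_{\langle l \rangle}$ acts by zero for every $l$ with $0 < |l| < p^{\,m-\fe-\lfloor (N+1)/e\rfloor}$. For the remaining $l$ with $|l| \geq p^{\,m-\fe-\lfloor (N+1)/e\rfloor}$, your divisibility observation $\wt{F}_{\beta}^*t_i - \wt{F}_{\alpha}^*t_i \in p\,\cO$ combined with the $m$-PD bookkeeping gives
\[
v_{\varpi}\bigl((\wt{F}_{\beta}^*t - \wt{F}_{\alpha}^*t)^{\{l\}}\bigr)
\;\geq\; |l| - \frac{e\,q_{|l|}}{p-1}
\;>\; |l|\Bigl(1 - \frac{e}{p^m(p-1)}\Bigr)
\;\geq\; p^{\,m-\fe-\lfloor (N+1)/e\rfloor}\Bigl(1 - \frac{1}{p^{m-\fe}}\Bigr),
\]
and the hypothesis $m \geq 2N+\fe+1$ is precisely what forces this to exceed $N$. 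Thus on the basis $\wt{F}_{\alpha}^*B$ the entire sum $\sum_{l\neq 0}$ vanishes modulo $\varpi^{N+1}$, leaving only the constant term $1 + \varpi^N F^*\Lambda_{\alpha\beta}$. The correction is therefore zero on the nose, not merely a coboundary, and $e(\F^*\cE) = F^*e(\cE)$.
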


\begin{proof}
Let us take 
a sufficiently fine open covering $\cX^{[s-1]} = 
\bigcup_{\alpha} \cX_{\alpha}^{[s-1]}$ 
of $\cX^{[s-1]}$ as above such that $\cX_{\alpha}^{[s-1]}$ admits 
a local coordinate and a lift of relative Frobenius 
$\wt{F}_{\alpha}: \cX_{\alpha}^{[s-1]} \lra \cX_{\alpha}^{[s]}$ 
as in the local definition of level raising Frobenius pullback. 
Then we have the isomorphism 
$\cE|_{\cX_{\alpha}^{[s]}} \cong (\cO_{\cX^{[s]}_{\alpha}}/\varpi^{N+1}
\cO_{\cX_{\alpha}^{[s]}})^r$ 
as above, and $\F^*\cE$ is defined by glueing 
$(\cO_{\cX_{\alpha}^{[s-1]}}/\varpi^{N+1}\cO_{\cX_{\alpha}^{[s-1]}})^r \cong 
\wt{F}_{\alpha}^*\cE$ via the composite 
$$
(\cO_{\cX_{\alpha\beta}^{[s-1]}}/\varpi^{N+1}\cO_{\cX^{[s-1]}_{\alpha\beta}})^r \cong 
(\wt{F}_{\alpha}^*\cE)|_{\cX_{\alpha\beta}^{[s-1]}}
\cong 
(\wt{F}_{\beta}^*\cE)|_{\cX_{\alpha\beta}^{[s-1]}}
\cong 
(\cO_{\cX_{\alpha\beta}^{[s-1]}}/
\varpi^{N+1}\cO_{\cX^{[s-1]}_{\alpha\beta}})^r, 
$$ 
where the isomorphism in the middle comes from the structure of 
HPD-stratification on $\cE$. Hence, an element 
$\wt{F}_{\alpha}^*(\e) \in 
(\cO_{\cX_{\alpha\beta}^{[s-1]}}/
\varpi^{N+1}\cO_{\cX^{[s-1]}_{\alpha\beta}})^r$ 
is sent by this isomorphism to 
\begin{comment}
\begin{equation}\label{eq:100}
\sum_{l \in \N^d} (\wt{F}_{\alpha}^*(t)-\wt{F}_{\beta}^*(t))^{\{l\}} 
\wt{F}_{\beta}^* (\pa_{\langle l \rangle}((1 + p^NF^*(\Lambda_{\alpha\beta}))(\e))),  
\end{equation}
where the local coordinate $t$ and the differential operators 
$\pa_{\langle l \rangle}$ are the ones we chose on $\cX_{\beta}$. 
\end{comment}
\begin{equation}\label{eq:100}
(1 + \varpi^NF^*(\Lambda_{\alpha\beta})) \left(\sum_{l \in \N^d} (\wt{F}_{\beta}^*(t)-\wt{F}_{\alpha}^*(t))^{\{l\}} 
\wt{F}_{\alpha}^* (\pa_{\langle l \rangle}(\e)) \right),  
\end{equation}
where the local coordinate $t$ and the differential operators 
$\pa_{\langle l \rangle}$ are the ones we took on $\cX_{\alpha}^{[s]}$ and 
in $\cD^{(m)}_{\cX_{\alpha}^{[s]}}$, respectively.  

Since $\cE$ is a $\varpi^{N+1}$-torsion $\cO_{\cX^{[s]}}$-coherent
 quasi-nilpotent $\wh{\cD}_{\cX^{[s]}}^{(m)}$-module, 
it is written as the image of a 
$\varpi^{N+1}$-torsion $\cO_{\cX^{[s+m-\fe]}}$-coherent
quasi-nilpotent $\wh{\cD}_{\cX^{[s+m-\fe]}}^{(\fe)}$-module by 
${\F^{m-\fe}}^*$. Hence, by Lemma \ref{lem:fd1}, 
$\pa_{\langle l \rangle}$ acts on some basis $B$ of 
$\cE |_{\cX_{\alpha\beta}^{[s]}}$ 
by zero when $0 < |l| < p^{m-\fe-\lfloor \frac{N+1}{e} \rfloor}$. 
So, when $\e$ runs through the above basis $B$ of $\cE 
 |_{\cX_{\alpha\beta}^{[s]}}$, 
the terms which may survive in the big bracket in 
\eqref{eq:100}
are the constant term $\wt{F}_{\alpha}^*(\e)$ and 
the terms with 
$(\wt{F}_{\alpha}^*(t)-\wt{F}_{\beta}^*(t))^{\{l\}}$, $|l| \geq 
p^{m-\fe-\lfloor \frac{N+1}{e} \rfloor}$. 
Because $\wt{F}_{\alpha}^*(t)-\wt{F}_{\beta}^*(t)$ is divisible by $\varpi$, 
$(\wt{F}_{\alpha}^*(t)-\wt{F}_{\beta}^*(t))^{\{l\}}$ is contained 
in $p^c\cE$, where $c$ is the additive $\varpi$-adic valuation 
of $\varpi^{|l|}/q_{|l|}!$, where 
$q_{|l|} = \lfloor |l|/p^m \rfloor$. 
Then if we denote by $\alpha(x) \, (x \in \N)$ the sum of 
digits of the $p$-adic expansion of $x$, we have the 
following estimate for $c$: 
\begin{align*}
c & = |l| - \frac{e(q_{|l|}-\alpha(q_{|l|}))}{p-1} 
> |l| - \frac{e|l|}{p^m(p-1)} 
\geq p^{m-\fe-\lfloor \frac{N+1}{e} \rfloor}
\left(1  - \frac{e}{p^m(p-1)} \right). 
\end{align*}
Then, when $m \geq 2N+ \fe + 1$, 
$m-\fe-\lfloor \frac{N+1}{e} \rfloor \geq N$ and 
$m \geq \fe + 3$. So 
$$
 p^{m-\fe-\lfloor \frac{N+1}{e} \rfloor}
\left(1  - \frac{e}{p^m(p-1)} \right) 
\geq 2^{N} \left( 1 - \frac{1}{8} \right) \geq N. $$ 
So $c \geq N+1$ and 
\eqref{eq:100} is equal to $(1 + \varpi^NF^*(\Lambda_{\alpha\beta}))(\e)$ 
when $\e$ runs through the basis $B$ of $\cE |_{\cX_{\alpha\beta}^{[s]}}$. 
Thus we see that 
$e(\F^*\cE) = \{F^*(\Lambda_{\alpha\beta})\} = 
F^*e(\cE)$. 
\end{proof}

\begin{cor}\label{cor:c}
Let $\cE$ be as in the proposition above with $s \geq b$, where 
$b$ is as in Proposition \ref{prop:b}. Then 
${\F^b}^*\cE$ is isomorphic to 
$(\cO_{\cX^{[s-b]}}/\varpi^{N+1}\cO_{\cX^{[s-b]}})^r$ as 
$\cO_{\cX^{[s-b]}}$-modules. 
\end{cor}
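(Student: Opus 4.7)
The plan is to iterate Proposition \ref{prop:c} $b$ times to transport the extension class $e(\cE)$ through successive level-raising Frobenius pullbacks, and then to combine its output with the vanishing $(F_{\rm abs}^*)^b=0$ on $H^1(X^{[n]},\cO_{X^{[n]}})$ that is already used inside the proof of Proposition \ref{prop:b}. Since the discussion preceding Proposition \ref{prop:c} shows that a module of the kind considered there is isomorphic to the trivial one if and only if its class $e$ vanishes, it suffices to prove $e({\F^b}^*\cE)=0$.

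For the iteration I would first verify that the hypotheses of Proposition \ref{prop:c} are preserved under one application of $\F^*$: the level $m$ becomes $m+1$, so the bound $m\ge 2N+\fe+1$ still holds; since $\F^*$ can be described locally as pullback along a Frobenius lift $\wt{F}$, which reduces mod $\varpi$ to a Frobenius on a smooth variety, the pullback preserves $V/\varpi^{N+1}V$-flatness and commutes with reduction mod $\varpi^N$, so $(\F^*\cE)/\varpi^N(\F^*\cE)=\wt{F}^*(\cO^r)=\cO^r$. Induction on $0\le k\le b$, invoking Proposition \ref{prop:c} at each step with $\cE$ replaced by ${\F^k}^*\cE$, then yields
$$ e({\F^b}^*\cE) = (F^*)^b\,e(\cE)\;\in\; H^1(X^{[s-b]},\cO_{X^{[s-b]}})^{r^2}. $$

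For the vanishing, on $X^{[s]}$ one has the factorization $F_{\rm abs}^b=F^b\circ\pi^b$ with $\pi^b\colon X^{[s]}\to X^{[s-b]}$ and $F^b\colon X^{[s-b]}\to X^{[s]}$, so on cohomology $(F_{\rm abs}^b)^*=(\pi^b)^*\circ(F^b)^*$. By the very argument in the proof of Proposition \ref{prop:b} (applied with $n=s$), the integer $b$ was chosen precisely so that $(F_{\rm abs}^b)^*=0$ on $H^1(X^{[s]},\cO_{X^{[s]}})$. Since $\pi^b$ is an isomorphism of underlying abstract schemes (only the $k$-structure is twisted by $\sigma^b$), $(\pi^b)^*$ is bijective on $H^1$, and it follows that $(F^b)^*$ is already the zero map. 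Combined with the previous step this gives $e({\F^b}^*\cE)=0$, hence the desired isomorphism.

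The main difficulty is really careful bookkeeping: ensuring that the hypotheses of Proposition \ref{prop:c} remain in force through all $b$ applications of $\F^*$, and correctly identifying the iterated $(F^*)^b$ produced by Proposition \ref{prop:c} with the factor of the absolute Frobenius killed by the cohomological vanishing from Proposition \ref{prop:b}. There is no essentially new ingredient required beyond those two propositions.
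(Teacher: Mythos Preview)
Your proof is correct and follows the same approach as the paper: iterate Proposition \ref{prop:c} to obtain $e({\F^b}^*\cE)={F^b}^*e(\cE)$, then use the definition of $b$ from the proof of Proposition \ref{prop:b} to conclude this class vanishes. The paper's own proof is a two-line version of yours, leaving the iteration and the passage from $(F_{\rm abs}^*)^b=0$ to $(F^*)^b=0$ implicit; your explicit verification of the hypotheses under iteration and your use of the bijectivity of $(\pi^b)^*$ simply spell out what the paper takes for granted.
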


\begin{proof}
By Proposition \ref{prop:c}, 
$e({\F^b}^*\cE) = {F^b}^*e(\cE)$ and by definition of 
$b$ given in the proof of Proposition \ref{prop:b}, it is zero. 
So ${F^b}^*\cE$ is isomorphic to 
$(\cO_{\cX^{[s-b]}}/\varpi^{N+1}\cO_{\cX^{[s-b]}})^r$ as 
$\cO_{\cX^{[s-b]}}$-modules.
\end{proof}

Now we give a proof of Theorem \ref{thm:1}. 

\begin{proof}[Proof of Theorem \ref{thm:1}] 
Let $r \in \N$ be the rank of $\cE$ 
and take $a, b \in \N$ such that the conclusion of 
Propositions \ref{prop:a}, \ref{prop:b} are satisfied. 
By making $a, b$ larger, we may assume that 
$a \geq b \geq \fe + 3$. 

First, note that, for any $m' \leq m$ and for any $i$, 
a quasi-nilpotent 
$\wh{\cD}_{\cX^{[i]}}^{(m)}$-module can be regardrd also 
as a quasi-nilpotent 
$\wh{\cD}_{\cX^{[i]}}^{(m')}$-module. Using this, 
we can replace the infinite set of $m \in \N$ for which 
$\cG^{[i](m)}$ is defined by any infinite subset of $\N$. 
So us put 
$I := \{m \,|\, b | (m-a-\fe), m \geq a + \fe\}$ and 
assume in the following that $\cG^{[i](m)}$ is defined for any 
$m \in I$. Now take 
any $m = b(m'-1)+a+\fe \in I$. 
For $0 \leq j \leq m-\fe$, 
let $\cG^{[i](m)[j]}$ be the $j$-th Frobenius antecedent of 
$\cG^{[i](m)}$ and put 
$G^{[i](m)[j]} := \cG^{[i](m)[j]}/\varpi\cG^{[i](m)[j]}$. 
Since $\cG^{[i](m)[j]}$ is $p$-torsion free, 
$\mu(G^{[i](m)[j]}) = \mu(\bE^{[i+j]}) = 0$, 
$p_{G^{[i](m)[j]}} = p_{\bE^{[i+j]}} = p_{\cO_X}$. 
Also, $G^{[i](m)[j]}$ is $\mu$-stable (hence stable): 
Indeed, for any coherent subsheaf $0 \not= H \subsetneq G^{[i](m)[j]}$, 
$p^{j} \mu(H) = \mu({F^j}^*H) < \mu(G^{[i](m)}) = 0$. 
(Here the inequality follows from the 
$\mu$-stability of $G^{[i](m)}$.) 
%%%%the assumption (A).) 
Hence any subsequence of length $a$ of 
the sequence $\{G^{[i](m)[j]}\}_{j=0}^{m-\fe}$ satisfies 
the assumption of Proposition \ref{prop:a}. Hence 
the sequence $\{G^{[i](m)[j]}\}^{b(m'-1)}_{j=0}$ 
is the constant sequence $\{\cO_{X^{[i+j]}}\}^{j=b(m'-1)}_{j=0}$. 
(So $r$ should be $1$.) 

In particular, $G^{[i](m)[b(m'-1)]} = 
\cG^{[i](m)[b(m'-1)]}/\varpi\cG^{[i](m)[b(m'-1)]}$ 
is isomorphic to $\cO_X = \cO_{\cX}/\varpi\cO_{\cX}$ as 
$\cO_{\cX}$-modules. Because it has a structure of 
$\wh{\cD}^{(a)}_{\cX}$-module, it has a structure of 
$\wh{\cD}^{(b)}_{\cX}$-module. 
We prove by induction that 
$\cG^{[i](m)[b(m'-l)]}/\varpi^l\cG^{[i](m)[b(m'-l)]}$
is isomorphic to $\cO_{\cX^{[i+b(m'-1)]}}/\varpi^l\cO_{\cX^{[i+b(m'-1)]}}$ 
as $\cO_{\cX^{[i+b(m'-1)]}}$-modules and 
it has a structure of $\wh{\cD}^{(lb)}_{\cX^{[i+b(m'-1)]}}$-module. 
Indeed, assume that this is true for $l$. 
Then, since $lb \geq l(\fe+3) \geq 2l + \fe + 1$, 
$${\F^b}^*(\cG^{[i](m)[b(m'-l)]}/\varpi^{l+1}\cG^{[i](m)[b(m'-l)]}) 
= \cG^{[i](m)[b(m'-l-1)]}/\varpi^{l+1}\cG^{[i](m)[b(m'-l-1)]}$$ 
is isomorphic to 
$\cO_{\cX^{[i+b(m'-l-1)]}}/\varpi^{l+1}\cO_{\cX^{[i+b(m'-l-1)]}}$ 
as $\cO_{\cX^{[i+b(m'-l-1)]}}$-modules by Corollary \ref{cor:c}, 
and it has a structure of $\wh{\cD}^{((l+1)b)}_{\cX^{[i+b(m'-l-1)]}}$-modules. 
So we see finally that 
$\cG^{[i](m)}/\varpi^{m'}\cG^{[i](m)}$ is 
 isomorphic to $\cO_{\cX^{[i]}}/\varpi^{m'}\cO_{\cX^{[i]}}$ as 
$\cO_{\cX^{[i]}}$-modules. 

Now let us move $m$: Put $i_0 := \min_{m \in I} (i(m))$ and 
take $m_0 \in I$ with $i_0 = i(m_0)$. First we check that 
$\bE^{[i_0]}$ is $\mu$-stable. 
Let $\fG$ be the coherent $\cO_{\fX}$-module corresponding to 
$\cG^{[i_0](m_0)}$ via GFGA. Then, 
for any coherent subsheaf $0 \not= \bE' \subsetneq \bE^{[i_0]}$, 
$\fG':= \fG \cap (\alpha_* \bE')$ (where 
$\alpha: \bX \lra \fX$ is the canonical open immersion) 
is a $p$-torsion free coherent subsheaf of $\fG$ such that $\fG/\fG'$
 is again $p$-torsion free. Hence $\fG'/\varpi\fG'$ is a coherent 
subsheaf of $\fG/\varpi\fG = G^{[i_0](m_0)}$ and so 
$\mu(\bE') = \mu(\fG'/\varpi\fG') < \mu(G^{[i_0](m_0)}) 
= \mu(\bE^{[i_0]})$ because $G^{[i_0](m_0)}$ is $\mu$-stable. 

Next, for $m = b(m'-1) + a + \fe \in I$, let $\cH^{\langle m \rangle}$ 
be the level raising Frobenius pullback 
of $\cG^{[i](m)}$ by $(i-i_0)$-times iteration of 
relative Frobenius. Then 
$\cH^{\langle m \rangle}/\varpi^{m'}\cH^{\langle m \rangle}$ is isomorphic 
to $\cO_{\cX^{[i_0]}}/\varpi^{m'}\cO_{\cX^{[i_0]}}$ 
and so 
$\cH^{\langle m \rangle}/\varpi\cH^{\langle m \rangle}$ is also $\mu$-stable for any 
$m \in I$. 

Hence, by theorem of Langton \cite{langton}, 
$\cH^{\langle m \rangle}$'s have the form 
$\varpi^{c_m}\cH^{\langle m_0 \rangle}$ for some $c_m \in \Z$ depending on $m$. 
This implies that 
$\cH^{\langle m_0 \rangle}/\varpi^{m'}\cH^{\langle m_0 \rangle}$ is isomorphic to 
$\cH^{\langle m \rangle}/\varpi^{m'}\cH^{\langle m \rangle}$, thus to 
$\cO_{\cX^{[i_0]}}/\varpi^{m'}\cO_{\cX^{[i_0]}}$ for all $m \in I$. Therefore, 
$\cH^{\langle m_0 \rangle}$ is isomorphic to 
$\cO_{\cX^{[i_0]}}$. 
So $\cE^{[i_0]} = \Q \otimes_{\Z} \cH^{\langle m_0 \rangle}$ is 
isomorphic to $\Q \otimes_{\Z} \cO_{\cX^{[i_0]}}$. 
To show the triviality of the action of $\cD^{\dagger}_{\cX^{[i_0]},\Q}$ on 
$\cE^{[i_0]}$, it suffices to see that 
$H^0(\cX^{[i_0]}, \Q \otimes \Omega^1_{\cX^{[i_0]}}) 
= H^0(\bX^{[i_0]},\Omega^1_{\bX^{[i_0]}}) = 0$. 
This follows from the (in)equalities
$$ 
\dim H^0(\bX^{[i_0]},\Omega^1_{\bX^{[i_0]}}) \leq 
\dim H^1_{\dR}(\bX^{[i_0]}) = \dim H^1_{\et}(\ol{\bX}^{[i_0]}, \Q_l) = 
\dim H^1_{\et}(X^{[i_0]},\Q_l) = 0. 
$$
Hence $\cE^{[i_0]}$ is a trivial convergent isocrystal and so 
is $\cE$. 
\end{proof}

Next we give a proof of Theorem \ref{thm:2}. 

\begin{proof}[Proof of Theorem \ref{thm:2}] 
Let $r, a, b$ as in the proof of \ref{thm:1}. 

First, by the same reason as before, 
we can replace the infinite set of $m \in \N$ for which 
$\cG^{[i](m)}$ is defined by any infinite subset of $\N$. 
Also, we can replace $l(m) (\leq m - \fe)$ for each $m$ 
by any smaller value, keeping the property $l(m) \to \infty 
\, (m \to \infty)$. 
Using this, we see that we may take an infinite subset $I'$ of $\N$, the set 
$I := \{c \,|\,\, br | (c-ar), c \geq br(r-1)+ar \}$ and 
a bijection $l: I' \lra I$ so that 
$\cG^{[i](m)}$ is defined for any 
$m \in I'$ and that $l = l(m)$ in the statement of 
Theorem \ref{thm:2} is given by the image of $m$ by 
the map $l$ above. 

Now take any 
%%%$m = br(m'+r-2)+ar \in I$. 
$m \in I'$ so that $l := l(m) =  br(m'+r-2)+ar$. 
For $0 \leq j \leq l$, 
define the Jordan-H\"older filtration \cite[Definition 1.5.1]{hl} 
$\{U^{[i](m)[j]}_q\}_{q=0}^{q_j}$ of $G^{[i](m)[j]}$ 
in the following way, by descending induction: 
First, when $j=l$, take 
any Jordan-H\"older filtration 
$\{U^{[i](m)[l])}_q\}_{q=0}^{q_l}$ of $G^{[i](m)[l]}$. 
When we defined 
$\{U^{[i](m)[j+1]}_q\}_{q=0}^{q_{j+1}}$, 
the pull-back $\{F^*U^{[i](m)[j+1]}_q\}_{q=0}^{q_{j+1}}$ of it 
by relative Frobenius defines a filtration of 
$F^*G^{[i](m)[j+1]} = G^{[i](m)[j]}$ whose graded pieces are 
semistable. Then we define $\{U^{[i](m)[j]}_q\}_{q=0}^{q_j}$ 
by taking any Jordan-H\"older filtration which refines 
$\{F^*U^{[i](m)[j+1]}_q\}_{q=0}^{q_{j+1}}$. 
By definition, we have 
$$ r \geq q_0 \geq q_1 \geq \cdots \geq q_l \geq 1, $$
where $r$ is the rank of $\cE$. 
So, there exists some 
$j_0$ such that 
$q_{j_0} = \cdots = q_{j_0+b(m'+r-2)+a} \allowbreak (=:Q)$. 

Put $V^{[i](m)[j]}_q := U^{[i](m)[j]}_q/U^{[i](m)[j]}_{q-1}$. Then, 
for each $1 \leq q \leq Q$, 
any subsequence of length $a$ of 
the sequence $\{V^{[i](m)[j]}_q\}^{j=j_0+b(m'+r-2)+a}_{j=j_0}$ 
satisfies 
the assumption of Proposition \ref{prop:a}. Hence 
$\{V^{[i](m)[j]}_q\}^{j=j_0+b(m'+r-1)}_{j=j_0}$ 
is isomorphic to 
the constant sequence $\{\cO_{X^{[i+j]}}\}^{j_0+b(m'+r-2)}_{j=j_0}$. 
Then, we can apply Proposition \ref{prop:b} to 
any subsequence of length $b(r-1)$ of
the sequence $\{G^{[i](m)[j]}\}^{j=j_0+b(m'+r-2)}_{j_0}$. 
So $\{G^{[i](m)[j]}\}^{j_0+b(m'-1)}_{j=j_0}$ is 
isomorphic to the constant sequence 
$\{\cO_{X^{[i+j]}}^r\}^{j_0+b(m'-1)}_{j=j_0}$. 
Then, by the same argument as the proof of Theorem \ref{thm:1}, 
we see from Proposition \ref{prop:c} that 
$\cG^{[i](m)}/\varpi^{m'}\cG^{[i](m)}$ is isomorphic to 
$(\cO_{\cX^{[i]}}/\varpi^{m'}\cO_{\cX^{[i]}})^r$. 

Now let us move $m$: Put $i_0 := \min_{m \in I'} (i(m))$, and 
for each $m \in I'$ with 
$l(m) = br(m'+r-2)+ar \in I$, let $\cH^{\langle m \rangle}$ 
be the level raising Frobenius pullback 
of $\cG^{[i](m)}$ by $(i-i_0)$-times iteration of 
relative Frobenius. 
Note that $\cH^{\langle m \rangle}/\varpi^{m'}\cH^{\langle m \rangle}$ is 
isomorphic to $(\cO_{\cX^{[i_0]}}/\varpi^{m'}\cO_{\cX^{[i_0]}})^r$ and so 
$\cH^{\langle m \rangle}/\varpi\cH^{\langle m \rangle}$ is semistable. 
On the other hand, 
$\bE^{[i_0]}$ is also semistable. 
(This can be proven in the same way as the proof of Theorem \ref{thm:1}.) 

Let $\ol{M}$ be the moduli 
of semistable sheaves on $\fX^{[i_0]}$ with rank $r$ and reduced 
Hilbert polynomial 
$p_{\cO_X}$, which is constructed by Adrian Langer (\cite{al1}, \cite{al2}). 
It is 
a projective scheme over $\Spec V$. Then, for any $m \in I$, 
$\cH^{\langle m \rangle}$ defines a $V$-valued point $P_{m}$ 
of $\ol{M}$ which induces the $K$-valued point $P_K$ defined by 
by $\bE^{[i_0]}$. Since $\ol{M}$ is separated, the $V$-point 
which extends $P_K$ is unique. Hence $P_{m}$ is independent of 
$m$, which we denote by $P$. 
(This does not imply that $\cH^{\langle m \rangle}$ are 
independent of $m$ because $\ol{M}$ is not a fine moduli.) 
On the other hand, let $P'$ be the $V$-valued point defined by 
$\cO_{\fX^{[i_0]}}^r$. Since 
$\cH^{\langle m \rangle}/\varpi^{m'}\cH^{\langle m \rangle}$ is trivial, 
the $V/\varpi^{m'}V$-valued point induced by $P$ is the same as 
the $V/\varpi^{m'}V$-valued point induced by $P'$. So $P=P'$ and this implies 
that $P_K$ is equal to the $K$-valued point defined by 
$\cO_{\bX^{[i_0]}}^r$. Hence $\bE^{[i_0]}$ is $S$-equivalent 
\cite[Definition 1.5.3]{hl} to 
$\cO_{\bX^{[i_0]}}^r$ when pulled back to the geometric fiber 
$\ol{\bX}^{[i_0]}$, namely, 
$\bE^{[i_0]}|_{\ol{\bX}^{[i_0]}}$ is an iterated extension of 
$\cO_{\ol{\bX}^{[i_0]}}$. Since $H^1(\ol{\bX}^{[i_0]},\cO_{\ol{\bX}^{[i_0]}}) 
\subseteq H^1_{\dR}(\ol{\bX}^{[i_0]}) = 0$, 
$\bE^{[i_0]}|_{\ol{\bX}^{[i_0]}}$ is isomorphic to 
$\cO_{\ol{\bX}^{[i_0]}}^r$, and this imples that 
$\bE^{[i_0]}$ is isomorphic to $\cO_{\bX^{[i_0]}}^r$. 
Then, by the same method as the proof of 
Theorem \ref{thm:1}, we see that 
$\cE^{[i_0]}$ is a trivial convergent isocrystal and so 
is $\cE$. 
\end{proof}


\begin{thebibliography}{99}

%\bibitem{ab}
%   Y. ~Andr\'{e} and F. ~Baldassarri, 
%  {\it De Rham cohomology of differential modules on 
%algebraic varieties}, Progress in Math. {\bf 189}, Birkh\"{a}user. 

\bibitem{al1}
A. Langer, Semistable sheaves in positive characteristic, 
Ann. Math. {\bf 159}(2004), 251--276. 

\bibitem{al2} 
A. Langer, Moduli spaces of sheaves in mixed characteristics, 
Duke Math. J. {\bf 124}(3) (2004),
571--586. 

\bibitem{berthelotI}
P. Berthelot, 
$\cD$-modules arithm\'etiques I: Op\'erateurs diff\'erentiels 
de niveau fini, Ann. Sci. E.N.S. {\bf 29}(1996), 185--272.

\bibitem{berthelotII}
P. Berthelot, 
$\cD$-modules arithm\'etiques II: Descente par Frobenius,
 M\'em. Soc. Math France (N.S.) {\bf 81}(2000), 1--136.

\bibitem{esnaultmehta} 
H. Esnault and V. Mehta, 
Simply connected projective manifolds
in characteristic $p > 0$ have no nontrivial stratified
bundles, Invent. Math. {\bf 181}(2010), 449--465. 

\bibitem{esnaultmehta2}
H. Esnault and V. Mehta, 
Simply connected projective manifolds
in characteristic $p > 0$ have no nontrivial stratified
bundles, available at 
http://www.mi.fu-berlin.de/users/esnault/helene\_publ.html, 95b 
(2013). 

\bibitem{esnaultsrinivas} 
H. Esnault and V. Srinivas, 
Simply connected varieties in characteristic $p>0$, 
with an appendix by Jean-Beno\^it Bost, preprint (2014). 

\bibitem{grothendieck} 
A. Grothendieck, A, Repr\'esentations lin\'eaires et 
compactifications profinies des groupes discrets, 
Manuscr. Math. {\bf 2}(1970), 375--396. 

\bibitem{hl}
D. Huybrechts and M. Lehn, The geometry of moduli spaces of sheaves, 
Aspects of Mathematics, vol. E31. Vieweg, Braunschweig (1997). 

\bibitem{kobayashi} 
S. Kobayashi, 
Differential geometry of complex vector bundles, Publications
of the Mathematical Society of Japan no. 15, Iwanami Shoten
Publishers and Princeton University Press, Princeton, N. J. (1987). 

\bibitem{langton} 
S. G. Langton, 
Valuative criteria for families of vector bundles on algebraic varieties, 
Ann. of Math., {\bf 101}(1975), 88--110. 

\bibitem{malcev}
A. Malcev, 
On isomorphic matrix representations of infinite groups, 
Mat. Sb. N.S. {\bf 8}(50) (1940), 405--422. 

\bibitem{mumford}
D. Mumford, 
Abelian varieties, Oxford University Press, London (1970). 

\bibitem{ogus1}
A. Ogus,
$F$-isocrystals and de Rham cohomology II  --- 
Convergent isocrystals, Duke Math. J., {\bf 51}(1984), 765--850.

\bibitem{ogus2}
A. Ogus, The convergent topos in characteristic $p$, pp. 133--162 
in Grothendieck Festschrift, Progress in Math. {\bf 88}, 
Birkh\"{a}user (1990). 

\end{thebibliography}
\end{document}